\newcommand\theoremnumbering{subsection}   
\newcommand\choosefont[1]{\usepackage{#1}}
\newcommand\pubpri[2]{%
\ifthenelse{\equal{\version}{public}}%
{{#1}}%
{\ifthenelse{\equal{\finalized}{no}}{\marginpar{\scshape\small Pubpri Alert}{#2}}{#2}}{}}
\newcommand\pubprinoalert[2]{%
\ifthenelse{\equal{\version}{public}}%
{{#1}}%
{#2}}
\newcommand\ignore[1]{}
\providecommand\wantcolor{yes}   %
\definecolor{backgroundyellow}{cmyk}{.2,.1,.8,.2}
\definecolor{backgroundblue}{rgb}{0,0,1}
\definecolor{backgroundred}{rgb}{1,0,0}
\definecolor{backgroundmagenta}{cmyk}{0,1,0,0}
\newcommand\mysection{\section}
\newcommand\mysubsection{\subsection}
\newcommand\mysubsectionstar{\subsection*}
\newcommand\mysubsubsection[1]{%
		\subsubsection{\sffamily\upshape\mdseries #1}}
\newcommand\mysss{\mysubsubsection}
\providecommand{\theoremnumbering}{document}
\newtheorem{theorem}[annotation]{
		Theorem}
\newtheorem{lemma}[annotation]{
		Lemma}
\newtheorem{definition}[annotation]{
		Definition}
\newtheorem{corollary}[annotation]{
		Corollary}
\newtheorem{proposition}[annotation]{
		Proposition}
\newtheorem{example}[annotation]{
		Example}
\newcommand\bexample{\begin{example}\begin{rm}}
\newcommand\eexample{\end{rm}\hfill$\Box$\end{example}}
\newtheorem{examplenobox}[annotation]{
		Example}
\newcommand\bexamplenobox{\begin{examplenobox}\begin{rm}}
\newcommand\eexamplenobox{\end{rm}\end{examplenobox}}
\newtheorem{exercise}[annotation]{
		Exercise}
\newcommand\bexercise{\begin{exercise}\begin{rm}}
\newcommand\eexercise{\end{rm}\end{exercise}}
\newtheorem{notation}[annotation]{
		Notation}
\newcommand\bnotation{\begin{notation}\begin{rm}}
\newcommand\enotation{\end{rm}\end{notation}}
\newtheorem{remark}[annotation]{
		Remark}
\newcommand\bremark{\begin{remark}
\begin{upshape}}
\newcommand\eremark{\end{upshape}
\end{remark}}
\newenvironment{remark*}{%
\par\noindent{\scshape 
  Remark: }\begin{rm}}{\hfill\end{rm}\newline} 
\newcommand\bremarkstar{\begin{remark*}}
\newcommand\eremarkstar{\end{remark*}}
\newcommand\bdefn{\begin{definition}
\begin{upshape}}
\newcommand\edefn{\end{upshape}
\end{definition}}
\newtheorem{caveat}[annotation]{
		Caveat}
\newcommand\bcaveat{\begin{caveat}
\begin{upshape}}
\newcommand\ecaveat{\end{upshape}
\end{caveat}}
\newenvironment{caveatstar}{
\par\noindent{\scshape\bfseries
  Caveat: }\begin{rm}}{\end{rm}\newline} 
\newcommand\bcaveatstar{\begin{caveatstar}}
\newcommand\ecaveatstar{\end{caveatstar}}
\newenvironment{myproof}{%
\par\noindent{\scshape 
  Proof: }\begin{rm}}{\hfill$\Box$\end{rm}\newline} 
\newcommand\bmyproof{\begin{myproof}}
\newcommand\emyproof{\end{myproof}}
\newenvironment{myproofnobox}{%
\par\noindent{\scshape Proof: }\begin{rm}}{\end{rm}\hfill\newline}
\newcommand\bmyproofnobox{\begin{myproofnobox}}
\newcommand\emyproofnobox{\end{myproofnobox}}
\newenvironment{solution}{%
\par\noindent{\scshape Solution: }\begin{rm}}{\hfill$\Box$\end{rm}\newline}
\newenvironment{solutionnobox}{%
\par\noindent{\scshape Solution: }\begin{rm}}{\end{rm}}
\newcommand\bsolution{\begin{solution}\begin{rm}}
\newcommand\esolution{\end{rm}\end{solution}}
\newcommand\bsolutionnobox{\begin{solutionnobox}\begin{rm}}
\newcommand\esolutionnobox{\end{rm}\end{solutionnobox}}
\newcommand\bthm{\begin{theorem}}
\newcommand\ethm{\end{theorem}}
\newcommand\bcor{\begin{corollary}}
\newcommand\ecor{\end{corollary}}
\newcommand\blemma{\begin{lemma}}
\newcommand\elemma{\end{lemma}}
\newcommand\bprop{\begin{proposition}}
\newcommand\eprop{\end{proposition}}
\newcommand\beqn{\begin{equation}}
\newcommand\eeqn{\end{equation}}
\newcommand\beqnstar{\begin{equation*}}
\newcommand\eeqnstar{\end{equation*}}
\newcommand\mtitle[1]%
\providecommand\finalized{yes}
\newcommand\mylabel[1]{\label{#1}}}%
\newcommand\mylabel[1]{\label{#1}\marginpar{[{\ttfamily\upshape\tiny #1}]}}}
\newcommand\checked[1]{}}%
\newcommand\checked[1]{\marginpar{[{\ttfamily\upshape\tiny CHECKED: #1}]}}}
\newcommand\spellchecked[1]{}}%
\newcommand\spellchecked[1]{\marginpar{[{\ttfamily\upshape\tiny SPELLCHECKED: #1}]}}}
\providecommand\version{public}   
\newcommand\mcomment[1]{}}%
\newcommand\mcomment[1]{\marginpar{{\raggedright\sffamily\upshape\small
\begin{spacing}{0.75} #1\end{spacing}}}}}
\newcommand\fcomment[1]{}}%
\newcommand\fcomment[1]{\footnote{#1}}}
\newcommand\comment[1]{}}%
\newcommand\comment[1]{{\small #1}}}
\theoremstyle{plain}
\newtheorem{prop}{Proposition}[section]
\newtheorem{lem}{Lemma}[section]
\newtheorem{rem}{Remark}[section]
\theoremstyle{remark}
\newcommand{\GL}{\operatorname{GL}}
\newcommand{\clmn}{c_{\lambda\mu}^\nu}
\newcommand{\dc}[2]{W_{#1} \backslash W / W_{#2}}
\newcommand{\cato}{\mathcal{O}}
\newcommand{\afsl}{\widehat{\mathfrak{sl}_2}}
\newcommand\sltwohat{\widehat{\mathfrak{sl}_2}}
\newcommand\slnhat{\widehat{\mathfrak{sl}_n}}
\newcommand\lieg{\mathfrak{g}}
\DeclareMathOperator\weight{wt}
\newcommand\myemptyset\Phi
\newcommand\vlambdanot{V(\Lambda_0)}
\newcommand\vlambdaone{V(\Lambda_1)}
\newcommand\chargedpart{\mathcal{P}}
\newcommand\tworegcpnot{{\chargedpart}^{\textup{2-reg}}_0}
\newcommand\tworegcpone{{\chargedpart}^{\textup{2-reg}}_1}
\newcommand\klwm{K(\lambda,w,\mu)}
\newcommand\klvm{K(\lambda,v,\mu)}
\newcommand\lie[1]{\mathfrak{#1}}
\newcommand\liegp{\mathfrak{g}}
\DeclareMathOperator{\ch}{char}
\newcounter{cnt}
\def\mydggeometry{\makeatletter\dg@YGRID=1\dg@XGRID=20\unitlength=0.003pt\makeatother}
\makeatother \theoremstyle{remark}
\numberwithin{equation}{section}
\def\section{\def\@secnumfont{\mdseries}\@startsection{section}{1}%
  \z@{.7\linespacing\@plus\linespacing}{.5\linespacing}%
  {\normalfont\scshape\centering}}
\def\subsection{\def\@secnumfont{\bfseries}\@startsection{subsection}{2}%
  {\parindent}{.5\linespacing\@plus.7\linespacing}{-.5em}%
  {\normalfont\bfseries}}
\begin{document}
\title[Kostant-Kumar crystals]{Crystals for Kostant-Kumar modules of $\sltwohat$}
\author{Mrigendra Singh Kushwaha}
\address{Department of Mathematics, Faculty of Mathematical Sciences, University of Delhi, New Delhi, 110\,007, India}
\email{mrigendra154@gmail.com}
\author{K.~N.~Raghavan}
\address{The Institute of Mathematical Sciences, HBNI, Chennai, 600113, India. \hspace{20em} {.} 
Current address: School of Interwoven Arts and Sciences,
Krea University,  Sri City,  Andhra Pradesh 517646,  India}
\email{knr@imsc.res.in}
\author{Sankaran Viswanath}
\address{The Institute of Mathematical Sciences, HBNI, Chennai, 600\,113, India}
\email{svis@imsc.res.in}
\thanks{\noindent The second and third authors acknowledge support under a DAE project grant to IMSc.}
\keywords{Lakshmibai-Seshadri paths, charged partitions, crystals, Kostant-Kumar modules}
\subjclass[2010]{17B67,17B10}
\begin{abstract}
    We consider the affine Lie algebra $\widehat{\mathfrak{sl}_2}$ and the Kostant-Kumar submodules of tensor products of its level 1 highest weight integrable representations. We construct crystals for these submodules in terms of the charged partitions model and describe their decomposition into irreducibles.
\end{abstract}
\maketitle

\mysection{Introduction}\mylabel{s:intro}
\noindent
Let $\lieg$ be a symmetrizable Kac-Moody algebra. Given a dominant integral weight $\lambda$ of $\liegp$, let $V(\lambda)$ denote the irreducible integrable $\liegp$-module with highest weight $\lambda$. For dominant integral weights $\lambda, \mu$, consider the tensor product decomposition:
\[ V(\lambda)\otimes V(\mu) = \bigoplus_\nu V(\nu)^{\oplus \clmn} \] 
where the sum runs over all dominant integral weights $\nu$, and $\clmn$ is the (finite) number of times $V(\nu)$ occurs in the decomposition.

There is a natural family $K(\lambda, w, \mu)$ of $\liegp$-submodules of $V(\lambda)\otimes V(\mu)$, indexed by elements $w$ in the Weyl group $W$ of $\lieg$. The $K(\lambda, w, \mu)$, called {\em Kostant-Kumar (KK) modules} in
\cite{krv}, are the cyclic submodules of the tensor product defined by:
\[ K(\lambda,w,\mu) = U\lie{g}(v_\lambda \otimes v_{w\mu}) \]
Here $U\lie{g}$ denotes the universal enveloping algebra of $\liegp$,
$v_\lambda$ a highest weight vector of $V(\lambda)$, and $v_{w\mu}$ a non-zero weight vector of the extremal weight space $w\mu$ of $V(\mu)$.

 Let $W_\lambda, \, W_\mu$ denote the stabilizers in $W$ of $\lambda, \mu$ respectively. 
It is easy to see that $\klwm$ depends only upon the double coset of $w$ in $\dc{\lambda}{\mu}$  (see, e.g., \cite[Section 5]{krv}). The $\klwm$ were first introduced by Kostant in order to formulate a refined version of the celebrated Parthasarathy-Ranga Rao-Varadarajan (PRV) conjecture. For further strengthenings of the PRV theorem, see \cite[Section 8]{krv}.

Since tensor products and their submodules are integrable modules in category $\cato$, we have complete reducibility \cite[Chapter 11]{kac} for $\klwm$, and we can write:
\[ K(\lambda, w, \mu)  = \bigoplus_\nu V(\nu)^{\oplus \clmn(w)}\]
The multiplicities $\clmn(w)$ have the following properties:
\begin{enumerate}
	\item $\clmn(w)\leq\clmn(v)$ if $W_\lambda w W_\mu \leq W_\lambda v W_\mu$ (Bruhat order on the double coset space). This follows from the containment relation (see, e.g., \cite[\S5]{krv}):
		\begin{equation}\label{eq:bruhatcont}	    
		\klwm\subseteq\klvm 
  \end{equation}
  \item $\clmn(w) \leq \clmn$ for all $w$, with equality holding for all ``sufficiently large'' $w$, i.e., for fixed $\lambda, \mu, \nu$, there exists $\sigma \in W$ such that equality holds for all $w \geq \sigma$. This is a consequence of the finiteness of $\clmn$ and Theorem~\ref{decompthm} below.
\end{enumerate}

In view of the second property, we sometimes think of the $\clmn(w)$ as a ``$w$-truncated" tensor product multiplicity.

When $\liegp$ is finite-dimensional or has a symmetric Cartan matrix (the latter assumption is expected to be unnecessary), Joseph \cite{joseph} (see also Lakshmibai-Littelmann-Magyar \cite{llm}) obtained a description of the multiplicities $\clmn(w)$ in terms of the path model or crystals. We state this below in the language of Lakshmibai-Seshadri (LS) paths. We recall that for a dominant integral weight $\mu$, an LS-path $\pi$ of shape $\mu$ is given by a sequence:
\[ \pi = (w_1>w_2>\ldots >w_r; \ 0 =a_0 < a_1 <a_2<\ldots <a_r=1) \]
where $w_i$  are minimal length coset representatives in $W/W_\mu$ and $a_i$ are rational numbers for $i=1,2,...,r$ satisfying certain integrality conditions (see Appendix below). We say that $w_1$ is the {\em initial direction}, $w_r$ is the {\em final direction} and $\weight \pi:=\sum_{i=1}^r (a_i - a_{i-1}) w_i \mu$ is the  weight (or endpoint) of the path $\pi$. For $0 \leq k \leq r$, we call $\sum_{i=1}^k (a_i - a_{i-1}) w_i \mu$ the {\em turning points} of $\pi$. Let $P_\mu$ denote the set of all LS-paths of shape $\mu$. Given a dominant integral weight $\lambda$, we say that $\pi \in P_\mu$ is {$\lambda$-dominant} if $\lambda + \gamma$ lies in the dominant Weyl chamber for every turning point $\gamma$ of $\pi$. With this preparation, we can now state:
\begin{theorem}[Joseph\cite{joseph}]\label{decompthm}
Let $\liegp$ be a symmetrizable Kac-Moody algebra that is either of finite type or has a symmetric Cartan matrix. Let $\lambda,\mu$ be dominant integral weights and $w$ be an element of the Weyl group. Then the decomposition of $K(\lambda,w,\mu)$ into irreducible $\liegp$-modules is: 
\begin{equation}
K(\lambda,w,\mu) = \bigoplus_{\pi \in P_\mu^\lambda(w)}V(\lambda+\weight \pi)
\end{equation}
where $P_\mu^\lambda(w)$ denotes the set of all $\lambda$-dominant LS-paths of shape $\mu$ whose initial direction is $\leq w$.
Equivalently, for all dominant integral weights $\nu$, we have: \[c_{\lambda \mu}^\nu(w) =\text{ the number of LS-paths }\pi \in P_\mu^\lambda(w) \text{ such that }\lambda+\weight \pi = \nu.\]
\end{theorem}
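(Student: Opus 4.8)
The plan is to replace the cyclic module $K(\lambda,w,\mu)$ by a module manufactured from a Demazure module, and then read off its decomposition from Littelmann's path realisation of $V(\lambda)\otimes V(\mu)$. First I would record an elementary reduction. Since $v_\lambda$ is a highest weight vector, $e_i(v_\lambda\otimes x)=v_\lambda\otimes e_i x$ for all $i$ and all $x\in V(\mu)$, so induction on degree gives $U(\mathfrak{n}^+)(v_\lambda\otimes v_{w\mu})=v_\lambda\otimes U(\mathfrak{n}^+)v_{w\mu}=v_\lambda\otimes V_w(\mu)$, where $V_w(\mu):=U(\mathfrak{n}^+)v_{w\mu}$ is the Demazure submodule of $V(\mu)$ attached to $w$ (it depends only on $wW_\mu$). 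As $v_\lambda\otimes V_w(\mu)$ is a sum of weight spaces, the PBW factorisation $U\lieg=U(\mathfrak{n}^-)\,U(\mathfrak{h})\,U(\mathfrak{n}^+)$ gives
\[ K(\lambda,w,\mu)=U(\mathfrak{n}^-)\bigl(v_\lambda\otimes V_w(\mu)\bigr), \]
so that $K(\lambda,w,\mu)$ is the $\lieg$-submodule of $V(\lambda)\otimes V(\mu)$ generated by the subspace $v_\lambda\otimes V_w(\mu)$.

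Next I would bring in the path model. Realise $B(\mu)$ as the set $P_\mu$ of LS-paths of shape $\mu$, equipped with Littelmann's root operators $\tilde e_i,\tilde f_i$. Inside the crystal $B(\lambda)\otimes B(\mu)$, the highest weight elements are exactly the concatenations $\pi_\lambda*\pi$ with $\pi_\lambda$ the straight-line path to $\lambda$ and $\pi$ a $\lambda$-dominant LS-path of shape $\mu$ (a short check with the tensor rule and the root operators, the first factor being forced to be $\pi_\lambda$), and the connected component of such a $\pi_\lambda*\pi$ is isomorphic to $B(\lambda+\weight\pi)$; writing $P_\mu^\lambda$ for the set of all $\lambda$-dominant LS-paths of shape $\mu$, this recovers $V(\lambda)\otimes V(\mu)=\bigoplus_{\pi\in P_\mu^\lambda}V(\lambda+\weight\pi)$ (Littelmann), and in the notation of the theorem $P_\mu^\lambda(w)=\{\pi\in P_\mu^\lambda:\ \text{initial direction of }\pi\le w\}$. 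By Littelmann's identification of the Demazure crystal of $V_w(\mu)$ with $\{\pi\in P_\mu:\ \text{initial direction of }\pi\le w\}$, the subspace $v_\lambda\otimes V_w(\mu)$ carries a crystal basis compatible with that of the tensor product, with crystal $D:=\{\pi_\lambda*\pi:\ \pi\in P_\mu,\ \text{initial direction of }\pi\le w\}$; moreover $D$ is stable under every $\tilde e_i$, by Kashiwara's $\tilde e_i$-stability of Demazure crystals together with the tensor rule. Now $K(\lambda,w,\mu)$ is completely reducible, so its crystal is a union of connected components of $B(\lambda)\otimes B(\mu)$, and since $K(\lambda,w,\mu)=U(\mathfrak{n}^-)(v_\lambda\otimes V_w(\mu))$ the components that appear are exactly those meeting $D$: a summand $V(\nu)$ occurs if and only if the projection of $v_\lambda\otimes V_w(\mu)$ onto its highest weight line is non-zero, which at the level of crystal bases says precisely that the corresponding component meets $D$. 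Finally, as $D$ is $\tilde e_i$-stable and the highest weight element of the component of $\pi\in P_\mu^\lambda$ is $\pi_\lambda*\pi$ itself, that component meets $D$ if and only if $\pi_\lambda*\pi\in D$, i.e.\ iff the initial direction of $\pi$ is $\le w$, i.e.\ $\pi\in P_\mu^\lambda(w)$; collecting terms with a fixed endpoint then yields the asserted formula for $\clmn(w)$.

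The step I expect to be the main obstacle is the middle assertion of the last paragraph: that the $\lieg$-submodule generated by the crystal-compatible subspace $v_\lambda\otimes V_w(\mu)$ has crystal precisely the union of those components of $B(\lambda)\otimes B(\mu)$ that meet $D$ — neither missing one nor acquiring a spurious one. There seem to be two ways to pin this down. One uses canonical bases: the global basis of $V(\lambda)\otimes V(\mu)$ is compatible with the Demazure filtration, so $v_\lambda\otimes V_w(\mu)$ is spanned by the canonical basis elements indexed by $D$, and one argues that the submodule they generate is again based, with basis indexed by the union of the relevant components. The other is an induction on $w$ along the Bruhat order of $\dclm$: the base case is $K(\lambda,e,\mu)=V(\lambda+\mu)$ (generated by the highest weight vector $v_\lambda\otimes v_\mu$, matching $P_\mu^\lambda(e)=\{\pi_\mu\}$), and the inductive step invokes the Demazure recursion $V_{s_iw}(\mu)=U(\mathfrak{b}_i)V_w(\mu)$ for $s_iw>w$ together with the matching recursion describing $P_\mu^\lambda(s_iw)\setminus P_\mu^\lambda(w)$, reducing everything to a computation with $\mathfrak{sl}_2$-strings. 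Either way this is the heart of Joseph's theorem; the remaining ingredients — the reduction of the first paragraph, the path-model bookkeeping, and complete reducibility — are routine.
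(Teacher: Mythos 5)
First, a point of comparison: the paper does not prove Theorem~\ref{decompthm} at all --- it is imported from Joseph \cite{joseph} (see also \cite{llm}) as a known input, so there is no in-paper argument to measure yours against; what follows assesses your sketch on its own terms. The outer layers of your proposal are correct and are the standard reductions. The identity $K(\lambda,w,\mu)=U(\mathfrak{n}^-)\bigl(v_\lambda\otimes V_w(\mu)\bigr)$ follows exactly as you say from $e_i v_\lambda=0$ and the PBW factorisation, and the crystal bookkeeping is sound: $D=\{\pi_\lambda*\pi:\ \text{initial direction of }\pi\leq w\}$ is stable under the raising operators by the tensor rule, the highest weight elements of $P_\lambda*P_\mu$ are the $\pi_\lambda*\pi$ with $\pi$ $\lambda$-dominant, and a connected component meets $D$ if and only if its highest weight element lies in $D$, i.e.\ if and only if $\pi\in P_\mu^\lambda(w)$.

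However, the step you yourself flag as the main obstacle is not one you then overcome --- it is the entire content of the theorem, and naming two candidate strategies without executing either leaves a genuine gap. Concretely: complete reducibility gives $K(\lambda,w,\mu)=\bigoplus_\nu V(\nu)^{\oplus m_\nu}$ with $m_\nu$ equal to the dimension of the space of $\mathfrak{n}^+$-invariant vectors of weight $\nu$ in $U(\mathfrak{n}^-)\bigl(v_\lambda\otimes V_w(\mu)\bigr)$, and nothing elementary identifies that dimension with the number of crystal components meeting $D$. The canonical basis of $V(\lambda)\otimes V(\mu)$ is only upper-triangularly related to the union of the bases of its irreducible constituents, so even granting that $v_\lambda\otimes V_w(\mu)$ is spanned by canonical basis vectors indexed by $D$ (itself a nontrivial compatibility statement), you must still prove that the submodule it generates is again based, with crystal exactly the union of the components meeting $D$ --- neither dropping a component whose highest weight vector has a small coefficient along $D$, nor acquiring a spurious one. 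That is the based-module and Demazure-flag machinery of Joseph and Lusztig, not a routine verification; and it is precisely where the hypothesis ``finite type or symmetric Cartan matrix'' enters, via the identification of the path crystal $P_\mu$ with Kashiwara's crystal $B(\mu)$ and the compatibility of global bases with Demazure submodules. Your sketch never invokes that hypothesis, which is a symptom that the hard part has not been engaged. As an outline of how a proof is organised, and of where the difficulty sits, your proposal is accurate; as a proof it is incomplete at its central step.
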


In \cite{krv}, we obtained an explicit path model for the module $K(\lambda, w, \mu)$. 
This was realized as a subset of the path model for the tensor product $V(\lambda) \otimes V(\mu)$. The latter is given by the set $P_\lambda * P_\mu$ of all concatenations of LS paths of shapes $\lambda$ and $\mu$. In this setting, by a path model  for $K(\lambda,w,\mu)$, we mean a subset $P(\lambda,w,\mu)$ of $P_\lambda * P_\mu$ that  (i) is invariant under the root (crystal) operators, (ii) has character coinciding with that of $K(\lambda,w,\mu)$ and (iii) satisfies the containment relations $P(\lambda,w,\mu) \subseteq P(\lambda,v,\mu)$ mirroring \eqref{eq:bruhatcont} for $W_\lambda w W_\mu \leq W_\lambda v W_\mu$.

Let $\pi_1$ and $\pi_2$ be LS-paths of shape $\lambda$ and $\mu$ respectively. We define $\mathfrak{w}(\pi_1 * \pi_2)$ to be the following element of the Weyl group \cite[\S 3, equation (1)]{krv}:
\begin{equation}\label{wasso}
\mathfrak{w}(\pi_1 * \pi_2): = \text{the unique minimal element of } W_\lambda \,I(\tau^{-1})\phi \,W_\mu
\end{equation}
where $\tau$ is the final direction of $\pi_1$, $\phi$ is the initial direction of $\pi_2$ and $I(\tau^{-1}) = \{\sigma \in W: \sigma \leq \tau^{-1}\}$ is the Bruhat ideal generated by  $\tau^{-1}$. We also define for $w \in W$:
\begin{equation}\label{eq:plwm}
P(\lambda,w,\mu):= \{ \pi_1 * \pi_2 \in P_\lambda * P_\mu : \mathfrak{w}(\pi_1 * \pi_2) \leq w \}
\end{equation}
We now have \cite[Theorem 7.1]{krv}:
\begin{theorem}\label{pathmod}
Let $\liegp$ be a symmetrizable Kac-Moody algebra that is either of finite type or has symmetric Cartan matrix. Let $\lambda,\mu$ be dominant integral weights and $w$ be an element of the Weyl group. The set $P(\lambda,w,\mu)$ is a path model for the Kostant-Kumar module $K(\lambda,w,\mu)$, i.e., it is invariant under the root operators and
\begin{equation}\label{eqpathmod}
\ch K(\lambda,w,\mu)= \sum_{\eta \in P(\lambda,w,\mu)} e^{\weight \eta}
\end{equation}
\end{theorem}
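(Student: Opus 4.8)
The plan is to exhibit $P(\lambda,w,\mu)$ as a union of connected components of the tensor-product path model $P_\lambda * P_\mu$, and then to match those components against the decomposition furnished by Joseph's Theorem~\ref{decompthm}. Recall, from Littelmann's theory, that under the root operators $P_\lambda * P_\mu$ breaks into connected components, each possessing a unique source; the sources are exactly the concatenations $\pi_\lambda * \pi$ with $\pi \in P_\mu$ a $\lambda$-dominant LS-path (here $\pi_\lambda$ denotes the straight-line path of shape $\lambda$), and the component through $\pi_\lambda * \pi$ is isomorphic as a crystal to $P_{\lambda+\weight\pi}$, contributing $\ch V(\lambda+\weight\pi)$ to $\ch\bigl(P_\lambda * P_\mu\bigr)$.

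The first and main step is a structural lemma: $\eta \mapsto \mathfrak{w}(\eta)$ is constant on every connected component of $P_\lambda * P_\mu$. It is enough to check $\mathfrak{w}(f_i\eta) = \mathfrak{w}(\eta)$ whenever $f_i\eta \neq 0$; the statement for $e_i$ then follows formally. By Littelmann's rule for a root operator acting on a concatenation $\eta = \pi_1 * \pi_2$, the operator $f_i$ acts either through $\pi_1$ --- fixing the initial direction $\phi$ of $\pi_2$ and replacing the final direction $\tau$ of $\pi_1$ by some $\tau'$ --- or through $\pi_2$ --- fixing $\tau$ and replacing $\phi$ by some $\phi'$. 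In either case the new direction is tightly constrained ($\tau'\in\{\tau,\,s_i\tau\}$ subject to a length inequality, and likewise for $\phi'$, both read off from the definition of the root operators), and the task is to verify that the minimal element of $W_\lambda\, I(\tau^{-1})\,\phi\, W_\mu$ is unchanged. This is a pure Weyl-group computation, combining those constraints with standard properties of the Bruhat ideal $I(\tau^{-1})$ under left and right multiplication and of minimal representatives in the double-coset space $\dclm$. I expect this verification to be the main obstacle; the remainder is bookkeeping.

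Next I would evaluate $\mathfrak{w}$ at a source. If $\pi$ is $\lambda$-dominant with initial direction $\phi$, then the first factor $\pi_\lambda$ has final direction $e$, so $I(e) = \{e\}$ and $\mathfrak{w}(\pi_\lambda * \pi)$ is the minimal element of $W_\lambda\,\phi\,W_\mu$; using $\lambda$-dominance one checks (a small Bruhat-order lemma about double cosets) that this minimal element is $\phi$ itself. Consequently $\mathfrak{w}(\pi_\lambda * \pi) \leq w$ precisely when $\phi \leq w$, which is exactly the condition ``initial direction $\leq w$'' defining $P_\mu^\lambda(w)$ in Theorem~\ref{decompthm}.

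Assembling: by the structural lemma $P(\lambda,w,\mu) = \{\eta : \mathfrak{w}(\eta)\leq w\}$ is a union of connected components of $P_\lambda * P_\mu$ --- namely those whose source is $\pi_\lambda * \pi$ with $\pi \in P_\mu^\lambda(w)$ --- and is therefore automatically stable under all the root operators. Summing the contributions of these components and invoking Theorem~\ref{decompthm},
\[
  \sum_{\eta \in P(\lambda,w,\mu)} e^{\weight\eta}
  \;=\; \sum_{\pi \in P_\mu^\lambda(w)} \ch V(\lambda + \weight\pi)
  \;=\; \ch K(\lambda,w,\mu),
\]
which is \eqref{eqpathmod}. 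The hypothesis that $\lieg$ is of finite type or has a symmetric Cartan matrix is used only through Theorem~\ref{decompthm}; the path-model machinery itself is valid for all symmetrizable $\lieg$. Finally, the containment $P(\lambda,w,\mu)\subseteq P(\lambda,v,\mu)$ for $W_\lambda w W_\mu \leq W_\lambda v W_\mu$ mirroring \eqref{eq:bruhatcont} is immediate once one observes that $\mathfrak{w}(\eta)$ is the minimal element of its own double coset (being the minimum of the larger set $W_\lambda I(\tau^{-1})\phi W_\mu$), so that $\mathfrak{w}(\eta)\leq w$ forces $W_\lambda\mathfrak{w}(\eta)W_\mu \leq W_\lambda wW_\mu \leq W_\lambda vW_\mu$ and hence $\mathfrak{w}(\eta)\leq v$.
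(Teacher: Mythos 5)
This theorem is not proved in the present paper: it is imported verbatim as \cite[Theorem 7.1]{krv}, so there is no in-paper proof to compare against. Your sketch does, however, reproduce the architecture of the proof given in \cite{krv}: show that $\eta\mapsto\mathfrak{w}(\eta)$ is constant on the connected components of $P_\lambda * P_\mu$, evaluate $\mathfrak{w}$ on the sources $\pi_\lambda*\pi$ (where your observation that $\lambda$-dominance of $\pi$ forces $\min W_\lambda\phi W_\mu=\phi$ is correct, and follows already from the first turning point condition), and then sum component characters against Joseph's decomposition. The one real caveat is that the ``structural lemma'' you flag as the main obstacle is not a routine verification but the entire technical content of the theorem: checking that $\min W_\lambda I(\tau^{-1})\phi W_\mu$ is unchanged when $\tau$ or $\phi$ is replaced by its $s_i$-reflection requires a genuine case analysis combining Littelmann's root-operator rules with Deodhar-type lemmas on Bruhat ideals and double cosets, and as written your proposal asserts rather than proves it. As a blueprint it is faithful to the source; as a proof it is complete only modulo that lemma.
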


In this article, we apply these theorems to the smallest infinite dimensional Kac-Moody algebra $\afsl$. The simplest non-trivial highest weight integrable representations $V(\lambda)$ of $\afsl$ are those of level 1. The decomposition of the tensor products $V(\lambda) \otimes V(\mu)$ into irreducibles  was obtained by Kac-Peterson (Proposition~\ref{prdec} below) for all pairs $\lambda, \mu$ of level 1 using the Weyl-Kac character formula. Later, Misra-Wilson \cite{mw}  gave an alternate proof using the crystals of these representations, realized in terms of {\em charged partitions} (or coloured Young tableaux) (see \S \ref{deckk} below). 

The crystal of charged partitions gives a more tractable model for representations of affine Lie algebras of type A, in comparison to the crystal of LS paths. However, the latter is more amenable to the study of Kostant-Kumar modules in general. We use an explicit isomorphism between these models to obtain concise descriptions of Kostant-Kumar crystals in the language of charged partitions. More specifically, we establish the following main results concerning Kostant-Kumar modules $K(\lambda,w,\mu)$ for $\lambda, \mu$ of level 1: 
\begin{enumerate}
\item  An explicit formula in terms of charged partitions for the decomposition of $K(\lambda,w,\mu)$ into irreducibles. Our result turns out to give a natural $w$-truncation (by length of $w$) of the result of Misra-Wilson mentioned above (see Propositions \ref{kkd0} and \ref{kkd1}).
\item Crystals for the $K(\lambda,w,\mu)$, described as a subset of pairs of charged partitions with a compatibility condition (see Propositions \ref{kkcrys1} and \ref{kkcrys2}). This may be viewed as a generalization of the  crystal of multipartitions \cite{jimbo-et-al} for level 2 irreducible representations of $\widehat{\mathfrak{sl}_2}$  (which corresponds to the case $w=1$).
\end{enumerate}

\mysection{Preliminaries}\mylabel{s:prelims}
\noindent

We set up notations for use in the rest of the paper. Let  $\lie{sl}(2, \mathbb{C})$ denote the simple Lie algebra of $2\times2$ traceless matrices.  Let $E := E_{1,2}, F := E_{2,1}$ and $H = E_{1,1}-E_{2,2}$ be the Chevalley generators of $\lie{sl}(2, \mathbb{C})$, where $E_{i,j}$ denotes the matrix with a $1$ in the $i^{th}$ row and $j^{th}$ column, and $0$ elsewhere. The affine Lie algebra $\widehat{\lie{sl}_2} = \lie{sl}(2, \mathbb{C}) \otimes \mathbb{C}[t,t^{-1}] \oplus \mathbb{C}c \oplus \mathbb{C}d$ is infinite dimensional, and generated by the degree derivation $d$ and the Chevalley generators:
 \[ e_1 = E\otimes 1,\; f_1 = F\otimes 1,\;  h_1 = H\otimes 1,\; e_0 = F\otimes t,\; f_0 = E\otimes t^{-1},\; h_0 = -h_1 + c \]
 where $c = h_0+h_1$ is the canonical central element \cite[Chapter 7]{kac}. The Cartan subalgebra is $\lie h = {\mathbb{C}}$-span of $\{h_0,h_1,d \}$. Let $\delta = \alpha_0 + \alpha_1$ be the null root where $\alpha_0, \alpha_1$ are the simple roots of $\widehat{\lie{sl}_2}$.
Let $\Lambda_0, \Lambda_1 \in \lie h^*$ be the fundamental weights of $\widehat{\lie{sl}_2}$, satisfying $\Lambda_i(h_j) = \delta_{ij}$ for $i, j =1,2$. Let $P^+ = \mathbb{Z}_{\geq 0}$-span of $ \{\Lambda_0, \Lambda_1 \} +  \mathbb{C}\delta $ denote the set of dominant integral weights. Let $s_0, s_1 \in \GL(\lie h^*)$ denote the simple reflections corresponding to $\alpha_0, \alpha_1$ respectively. Then the Weyl group $W$ of $\widehat{\lie{sl}_2}$ is generated by $s_0,s_1$.
%

\mysection{$2$-regular charged partitions and their crystal structure}\mylabel{s:cyd}
\noindent
We recall two realizations of the crystal of the module $V(\Lambda_0)$ of $\afsl$ - in terms of (a) charged partitions (or coloured Young diagrams) \cite{jimbo-et-al,mm} and (b) Lakshmibai-Seshadri paths \cite{ys}. 
These two models are  isomorphic as crystals and we describe this isomorphism explicitly for use in later sections.

The set $\tworegcpnot$ (respectively $\tworegcpone$) of $2$-regular charged partitions of charge~$0$ (respectively, charge $1$) provides a combinatorial model for the fundamental representation $\vlambdanot$ (respectively $\vlambdaone$).   Recalled in this section are the basic definitions and results leading up to this fact, with full proofs (see, e.g., Remark~\ref{r:redsign}).     The root operators defined in~\S\ref{ss:rootops} endow $\tworegcpnot$ with its crystal structure.

The results in this section have generalizations to the case of $\slnhat$,  as is well known: see, e.g., \cite{mw}.
\mysubsectionstar{Notation}\noindent
Let $I=\{0,1\}$. Throughout this section, $i$ and $j$ are used to denote elements of~$I$.  
\mysubsection{Charged partitions and their diagrams}\mylabel{ss:cpdiag}
A {\em charged partition\/} is a pair $(\pi, j)$ where $\pi$ is an integer partition and $j$ is an element of~$I$.  We call $j$ the {\em charge\/}.
The {\em diagram\/} of a charged partition consists of the Young diagram of $\pi$,  the boxes of which are filled with elements of~$I$, such that $j-r+c  \pmod{2}$ is the entry in the box in the $r^{th}$ row (counted as usual from the top) and $c^{th}$ column (counted as usual from the left).  The entries in the boxes are sometimes also called {\em labels\/}.

We let $\myemptyset$ denote the \emph{empty Young diagram} and consider $(\myemptyset,0)$ and $(\myemptyset,1)$ as charged partitions.   
\mysubsection{A running example}\mylabel{ss:example}
For example, here is the diagram of the charged partition
$(8+6+3+1,0)$:
\begin{equation}\label{x:cyd}
	\begin{array}{cc}
		{		
	\begin{array}{|c|c|c|c|c|c|c|c|}
	\hline
	0 & 1 & 0 & 1 & 0 & 1 & 0 & 1\\
	\hline
	1 & 0 & 1 & 0 & 1 & 0 \\
	\cline{1-6}
	0 & 1 & 0\\
	\cline{1-3}
	1\\
	\cline{1-1}
\end{array}
		}{\begin{array}{rl}
			\textup{Weight:} & \Lambda_0-9\alpha_0-9\alpha_1\\
			\textup{$0$-signature:} & +\ +\ -\ -\ +\\	
			\textup{$1$-signature:} & -\ +\ +\ - \\	
			\textup{reduced $0$-signature:} & +\ +\ - \\	
			\textup{reduced $1$-signature:} &  +\ - \\	
		\end{array}
			}
		\end{array}
	\end{equation}

	\mysubsection{Weight}\mylabel{ss:weightcyd} 
	
	The {\em weight\/} of a charged partition $(\pi, j)$ is defined to be the element $\Lambda_j-n_0\alpha_0-n_1\alpha_1$ in the weight lattice of~$\lieg=\sltwohat$,
	where $n_0$ and $n_1$ are the numbers of zeroes and ones respectively in the diagram of the charged partition.
	The weight of the example in~\eqref{x:cyd} is $\Lambda_0-9\alpha_0-9\alpha_1$.

	\mysubsection{Removable and addable boxes; signatures}\mylabel{ss:sign}
A column of a charged partition is called {\em $i$-removable\/} if its bottom-most box is labelled $i$ and can be removed leaving another charged partition;
it is called {\em $i$-addable\/}, if a box labelled $i$ can be added to its bottom, giving another charged partition. 

The {\em $i$-signature\/} of a charge partition is a string of $+$ and $-$ signs, determined as follows. Scan the columns, one by one, from left to right (including the rightmost empty column).  Write a  $+$ if the column being scanned is $i$-addable, a $-$ if it is $i$-removable, and nothing if neither.   The string thus obtained is the $i$-signature.

For the example in~\eqref{x:cyd}, the $0$-addable columns are the first, second, and ninth;  the $0$-removable columns are the third and sixth; the $1$-addable columns are the fourth and seventh;  and the $1$-removable columns are the first and eighth.   The fifth column is neither $i$-addable nor $i$-removable for either value of~$i$.    
Thus the $0$-signature is $+$ $+$ $-$ $-$ $+$,
and the $1$-signature is $-$ $+$ $+$ $-$.

As we will see (in the definition of root operators in~\S\ref{ss:rootops}),  it will be important to keep track of which columns contributed to which signs in the $i$-signature.  For the example in~\eqref{x:cyd}, the numbers of the columns that contribute to the $0$-signature are $1$, $2$, $3$, $6$, $9$;   those that contribute to the $1$-signature are $1$, $4$, $7$, $8$.
\bremark\mylabel{r:redsign}   The {\em reduced $i$-signature\/} (of a charged partition), which we will formally define in~\S\ref{ss:sigred} below,  is usually defined as the string that is left behind after recursively deleting substrings of the form $-$ $+$ (that is,  a negative sign followed immediately by a positive sign)  from the $i$-signature.     For example,  the reduced $i$-signature is $+$ if the $i$-signature is $+$ $-$ $-$ $+$ $+$.    However it is not immediately clear from this description that different ways of deleting occurrences of $-$ $+$ lead to the same final string and, moreover,  that the status of a particular sign---whether or not it survives in the reduced signature---is independent of the chosen deletion order.

We therefore resort to a more formal definition,  by means of defining {\em reducible substrings\/} and making some observations about them (Lemmas~\ref{l:reducible} and \ref{l:reduction}).   The reader willing to accept the informal definition above may want to skip the next two subsections \S\ref{ss:reducible} and \S\ref{ss:sigred}.
\eremark

\mysubsection{Reducible substrings}\mylabel{ss:reducible}
A (finite) string, possibly empty, of minus and plus signs is said to be {\em reducible\/} if the following conditions hold:
\begin{enumerate}
	\item\label{i:redone} The number of minus signs equals the number of plus signs.
	\item\label{i:redtwo} Scanning the string from left to right,   the number of minus signs encountered up to any given point is at least the number of plus signs encountered up to that point.
\end{enumerate}
Here are two examples:   $-$ $+$  and $-$ $-$ $+$ $+$ $-$ $-$ $+$ $-$ $-$ $+$ $+$ $-$ $+$ $+$.

Note that any reducible string must start with $-$ and end with $+$.    Note also that, in the presence of condition~\ref{i:redone},  condition~\ref{i:redtwo} is equivalent to the following:
\begin{enumerate}
	\item[(2')] Scanning the string from right to left,   the number of minus signs encountered up to any given point is at most the number of plus signs encountered up to that point.
\end{enumerate}

Given a string of minus and plus signs,   a {\em substring\/} is any single continuous piece (possibly empty) of the string.   A substring that is reducible as a string in its own right is called a {\em reducible substring\/}.    For example, the string $-$ $-$ $+$ has $7$ substrings: \[\textup{the empty string, \quad $-$,\quad $+$,\quad $-$ $-$,\quad $-$ $+$,\quad $-$ $-$ $+$} \]
Of these,  only one, namely $-$ $+$, is reducible.
\blemma\mylabel{l:reducible}
\begin{enumerate}
	\item\label{i:lemone} Any initial substring of a string that satisfies condition~\ref{i:redtwo} also satisfies~\ref{i:redtwo}.
	\item\label{i:lemone'} If a reducible string is inserted into another reducible one (at any position), the resulting string is also reducible.  In particular, the concatenation of two reducible strings is reducible.
	\item\label{i:lemtwo} If a final substring~$v$ of a reducible string~$r$ satisfies condition~\ref{i:redtwo},  then the initial substring $r\setminus v$ is reducible (and so is~$v$).

	\item\label{i:lem3} Suppose that two reducible substrings either meet or occur immediately one after the other (meaning that there is no sign in between them). Then their union is a reducible substring.   In particular, maximal reducible substrings are separated (by signs occurring in between).
	\item\label{i:lem4} Suppose that a maximal reducible substring occurs in the middle,  meaning that it has both a sign that immediately precedes it and another that immediately follows it.  Then either the former sign is not $-$ or the latter is not $+$.   In other words,
		a string cannot have the following form:   $\ldots$ $-$ $m$ $+$ $\ldots$,   where $m$ is a maximal reducible substring.
	\item\label{i:lem5} A reducible substring that meets (non-trivially) a maximal reducible substring is contained in it.
	\item\label{i:lem6} A subset of a string that is a union of reducible substrings has an equal number of plus and minus signs.
\end{enumerate}
\elemma
\bmyproof  Items~\eqref{i:lemone} and~\eqref{i:lemone'} are immediate from the definition.   For~\eqref{i:lemtwo},  first observe that $r\setminus v$ satisfies condition~\ref{i:redtwo} by item~\eqref{i:lemone}. 
Now, since $r\setminus v$ and $v$ both satisfy~\ref{i:redtwo} and $r$ satisfies~\ref{i:redone}, it follows that both $r\setminus v$ and $v$ satisfy~\ref{i:redone}.

For \eqref{i:lem3}, let $r$ and $s$ be reducible substrings satisfying the hypothesis. Let their (possibly empty) intersection be~$w$ and suppose that $w$ is a final substring of~$r$ and an initial substring of~$s$.   
Now, $w$ satisfies~\ref{i:redtwo} by~\eqref{i:lemone},  so $r\setminus w$ is reducible by~\eqref{i:lemtwo}, and so $r\cup s=(r\setminus w)\cup s$ is reducible by~\eqref{i:lemone'}.

For~\eqref{i:lem4},  observe that $-$ $m$ $+$ is reducible (e.g. by~\eqref{i:lemone'}),  contradicting the maximality of~$m$.  Item~\eqref{i:lem5} is an immediate consequence of~\eqref{i:lem3}. Finally, for~\eqref{i:lem6}, observe that such a subset can be written,  using~\eqref{i:lem3}, as a disjoint union of reducible substrings and therefore the conclusion holds.~\emyproof

\mysubsection{Reduction of a string; reduced $i$-signature}\mylabel{ss:sigred}
Given a string of plus and minus signs,  its {\em reduction\/} is the string that remains after deleting the union of all reducible substrings.
The {\em reduced $i$-signature\/} of a charged partition is the reduction of its $i$-signature. 

From items~\eqref{i:lem3} and~\eqref{i:lem4} of Lemma~\ref{l:reducible},  it follows that the reduction always consists of some (possibly zero) plus signs followed by some (possibly zero) minus signs.
For the example in~\eqref{x:cyd},  
the reduced $0$-signature is $+$ $+$ $-$,
and the reduced $1$-signature is $+$ $-$.   The contributing column numbers to the $0$-signature and $1$-signature respectively are $1$, $2$, $3$ and $7$, $8$.
\blemma\mylabel{l:reduction}
Suppose that, after deleting from a string~$s$ the union of some reducible substrings, we are left with a string~$r$ consisting of some (possibly zero) plus signs followed by some (possibly zero) minus signs.     Then $r$ is the reduction of~$s$.
\elemma
\bmyproof   It is enough to show that there is no reducible substring of~$s$ that intersects~$r$ non-trivially.    Suppose, by way of contradiction, that $m$ is such a string.  We may assume that $m$ is maximal.

Suppose $m$ contains a plus sign belonging to~$r$.  (The proof in the case that $m$ contains a minus sign belonging to~$r$ is dually similar and therefore omitted.)  Choose a first (that is, left most) such sign that belongs to $m$, and let $m'$ be the initial string of~$m$ up to but not including this sign.  We allow for the possibility that $m'$ is empty.   

We claim that $m'$ has equal number of plus and minus signs.    It is enough to prove this,  for then, considering the initial substring $m'$ $+$ of~$m$, we see that $m$ fails condition~\ref{i:redtwo} in the definition of reducibility, and we have the desired contradiction.

To prove the claim,  let $s'$ be the initial string of~$s$ up to but not including the plus sign just after~$m'$.    We have $m'=(s'\setminus r)\cap m$.    It is enough, by item~\eqref{i:lem6} of Lemma~\ref{l:reducible}, to show that $m'$ is the union of reducible substrings of~$s$.

Let $x$ be any element of~$m'$.    Then $x\in s'\setminus r\subseteq s\setminus r$.   It follows from the hypothesis that there exists a reducible substring~$u$ of $s$ that contains $x$ and is contained in~$s\setminus r$.    Since $u$ is a substring,  it follows that $u\subseteq s'\setminus r$.  Since $u$ meets $m$ (they both contain~$x$),   it follows from item~\eqref{i:lem5} of Lemma~\ref{l:reducible} that $u\subseteq m$.   Thus $u\subseteq (s'\setminus r)\cap m=m'$.    Since $x$ was arbitrary,   this completes the proof.~\emyproof

\bremark\mylabel{r:signature}  Suppose that there are two consecutive plus (respectively minus) signs in the $i$-signature of a charged partition and that the one on the left (respectively right) survives in the reduced $i$-signature (or, in other words,  is not part of any reducible substring).   Then the other one also survives.   This is because a reducible substring 
cannot start with $+$ or end with $-$.
\eremark

\mysubsection{The functions $\varepsilon_i$ and $\varphi_i$}\mylabel{ss:phiepsilon}  
The function $\varepsilon_i$ (respectively $\varphi_i$) on charged partitions is defined to be the number of the minus (respectively plus) signs in the corresponding reduced $i$-signatures.
\mysubsection{Regular partitions}\mylabel{ss:regular}  Recall that a partition is called {\em $2$-regular\/} if each (non-zero) part of it occurs at most once.   In terms of the diagram associated to the partition this means that no two (non-empty) rows of the partition have the same number of boxes.  A charged partition is called {\em $2$-regular\/}  if the underlying partition is so.   Throughout this note, we say ``regular'' to mean ``2-regular''.    The example in~\eqref{x:cyd} is regular.

\bremark\mylabel{r:regular}  
Suppose that there is an $i$-addable (respectively $i$-removable) column of a regular charged partition and that the regularity is lost by adding (respectively removing) the box at the bottom of that column.   Then the next (respectively previous) column is also $i$-addable (respectively $i$-removable).
\eremark

\mysubsection{Root operators}\mylabel{ss:rootops}  
We recall the definition of the {\em root operators\/} $e_i$ and $f_i$ (for $i$ in~$I$) on charged partitions.    Augment the set of charged partitions with a special element, the {\em null element\/},  denoted $0$.   The $e_i$ (respectively $f_i$) acts on a charged partition by removing a box from (respectively adding a box to) the bottom of the column corresponding to the left most $-$ (respectively right most $+$) in the reduced $i$-signature.   In case there are no minus (respectively plus) signs in the $i$-reduced signature,  the operator~$e_i$ (respectively $f_i$) {\em kills\/} the charged partition, which just means that the action produces the null element $0$.

On the charged partition in the example in~\eqref{x:cyd},  the root operators act as follows:
\begin{equation*}
	\begin{array}{l}
		\textup{$e_0$ removes the bottom box labelled $0$ from the third column}\\
		\textup{$e_1$ removes the bottom box labelled $1$ from the eighth column}\\
		\textup{$f_0$ adds a box labelled $0$ to the bottom of the second column}\\
		\textup{$f_1$ adds a box labelled $1$ to the bottom of the fourth column}\\
	\end{array}
\end{equation*}
\blemma\mylabel{r:rootpreserve}  Let $b$ be a charged partition that is not killed by $e_i$ (respectively $f_i$) and put $b'=e_ib$ (respectively $b'=f_ib$).
\begin{enumerate}\item\label{i:lr1}   The charges of $b$ and $b'$ are the same.   In other words, charge is preserved by the root operators.
		\item\label{i:lr1'}  
			The weight of $b'$ equals $\weight{b}+\alpha_i$ (respectively $\weight{b}-\alpha_i$). 
		\item\label{i:lr1'a} Suppose that $r$ is a reducible substring of the $i$-signature of~$b$.   Then $r$ does not contain the contribution of the unique column that undergoes change under the passage from~$b$ to~$b'$,  and thus $r$ is also a reducible substring of the $i$-signature of~$b'$.
		\item\label{i:lr1''} The columns of $b$ that contribute to the reduced $i$-signature of~$b$ are precisely those that contribute to that of~$b'$ as well.
		The reduced signature of $b'$ is obtained from that of~$b$ by changing the left most $-$ in it to $+$ (respectively changing the right most $+$ in it to~$-$). 
	\item\label{i:lr1''a} $f_ib'=b$ (respectively $e_ib'=b$).  Thus $e_i$ and $f_i$ are ``partial'' inverses of each other.
		\item\label{i:lr'''} The value of the function~$\varepsilon_i$ (respectively~$\varphi_i$) on a charged partition equals the maximum number of times $e_i$ (respectively $f_i$) can be applied to it without killing it.
		\item\label{i:lr2} If $b$ is regular then so is~$b'$. In other words,  the set of regular charged partitions,  along with the null element~$0$,  is stable under the action of the root operators.
\end{enumerate}
\elemma
\bmyproof
			Items~\eqref{i:lr1} and~\eqref{i:lr1'} are immediate from the definitions. Item~\eqref{i:lr1'a} is also clear since the unique column that changes contributes to the reduced $i$-signature of~$b$.

			The second statement in~\eqref{i:lr1''} follows from the first.   To prove the first,  let $r$ be the union of all reducible substrings of the $i$-signature of~$b$.  By~\eqref{i:lr1'a}, $r$ is also a union of reducible substrings of~$b'$.    Removing $r$ from the $i$-signature of~$b'$ leaves a string of plus signs followed by minus signs.   It follows from Lemma~\ref{l:reduction} that this remainder is the reduced $i$-signature of~$b'$ and that $r$ is also the union of all reduced substrings of~$b'$.

			Item~\eqref{i:lr1''a} follows from~\eqref{i:lr1''} and~\eqref{i:lr'''} in turn from~\eqref{i:lr1''a}.   Item~\eqref{i:lr2} follows from Remarks~\ref{r:signature} and \ref{r:regular}.
			\emyproof
\ignore{
has above example \ref{excyd}, the $0$-\emph{signature} is $\{++--+\}$ and reduced $0$-\emph{signature} is $\{++-\}$. We define $\widetilde{f_i} b$ to be the charged partition obtained from $b$ by adding a box to the bottom of the column corresponding to the rightmost $+$  in its reduced $i$-signature. If there is no $+$ sign in the reduced $i$-\emph{signature}, we define $\widetilde{f_i} b =0$. Analogously, $\widetilde{e_i}$ acts on $b$ by removing the bottom-most box of the column corresponding to the leftmost $-$. If there is no $-$ sign in the reduced signature, we set $\widetilde{e_i} b =0$.  The function $\varphi(b)$ is the number of $+$ signs and $\varepsilon(b)$ the number of $-$ signs in the reduced $i$-\emph{signature} of $b$. The weight function $wt:\mathcal{Y}(0)\rightarrow P$ is defined by $b\rightarrow \Lambda_0 - \sum_{i=0}^{1} \#\{i-coloured \ box \ in \ b \}\, \alpha_i$. Then $\mathcal{Y}(0)\cong B_{\Lambda_0}$ \cite{jimbo-et-al,mm}.

Similarly, we have the set $\mathcal{Y}(1)$ of charged partitions of charge $1$. Define $e_i, f_i$ correctly here. Then $\mathcal{Y}(1)\cong B_{\Lambda_1}$.

%
}
\subsection{LS paths for $\widehat{\lie{sl}_2}$}\label{lspathsl2}
We will assume all basic notation and results from Littelmann's theory of paths \cite{litt:inv,litt:ann}. A path $\pi$ is a piecewise linear map $\pi:[0,1]\rightarrow \lie{h}^*$. Let $\Pi$ be the collection of all paths such that for $\pi \in \Pi$, $\pi(0)=0$ and $\pi(1)$ is an integral weight. We identify two paths $\pi, \pi'$ in $\Pi$ if their images in $\lie h^*$ are the same. 
 
   Let $\lambda \in P^+$ be a dominant integral weight. A Lakshmibai-Seshadri path (LS path) $\pi$ of shape $\lambda$ is given by a sequence:
\begin{equation}
\pi= (\bar{w},\bar{a}): = (w_1>w_2>...>w_r; 0<a_1<a_2<...<a_r=1 ).
\end{equation}
satisfying some integrality conditions (see the Appendix below) \cite{litt:inv}. Here $w_j$ are minimal coset representatives in $W/W_\lambda$ and $a_j$ are rational numbers for $j=1,2,\ldots,r$.  Let $P_\lambda $ denote the set of all LS paths of shape $\lambda$. Then $P_\lambda$ can be given a crystal structure with the root operators $e_i, f_i$ defined for every simple root $\alpha_i$ \cite{litt:ann}.  

\subsection{}
We now recall Sanderson's  description \cite{ys} of LS paths for the level 1 representations $V(\Lambda_0)$ and $V(\Lambda_1)$ of $\widehat{\mathfrak{sl}_2}$. First, note that
\begin{equation}\label{stb0}
W/W_{\Lambda_0}  = \{ w_n^+ := s_{i_n}\cdots s_{i_2}s_{i_1}: n \geq 0,\; i_j \equiv j+1\pmod{2}\}
\end{equation}
\begin{equation}\label{stb1}
W/W_{\Lambda_1}  = \{ w_n^- := s_{i_n} \cdots s_{i_2}s_{i_1}: n \geq 0,\; i_j\equiv j\pmod{2}  \}
\end{equation}
 where $W_{\Lambda_0}$ and $W_{\Lambda_1}$ are the stabilizers in $W$ of $\Lambda_0,\Lambda_1 $ respectively. We observe that the Bruhat order on these coset spaces is a  total order:
  \begin{equation} \label{eq:bruhat}
  w_m^\epsilon \geq w_n^\epsilon \text{ if and only if } m \geq n, \text{ for } \epsilon \in \{+,-\}
\end{equation}
 We also note that if $\alpha_0^\vee, \alpha_1^\vee$ denote the simple coroots, then
 \begin{equation}\label{aglev}
 \begin{array}{cc}
 \langle w_{2k}^+\,\Lambda_0, \,\alpha_0^\vee \rangle = 2k+1;  &  \langle w_{2k}^+\,\Lambda_0, \,\alpha_1^\vee \rangle = -2k \\
 \langle w_{2k-1}^+ \Lambda_0, \,\alpha_0^\vee \rangle = -(2k-1);  &  \langle w_{2k-1}^+\Lambda_0, \,\alpha_1^\vee \rangle = 2k
 \end{array}
 \end{equation}
 where $\langle \cdot, \cdot \rangle$ is the pairing $\mathfrak{h}^* \times \mathfrak{h} \to \mathbb{C}$.
 
\subsection{}
We primarily consider LS paths of shape $\Lambda_0 $, because of the symmetric role of $\alpha_0$ and $\alpha_1$. In the remainder of the paper, we will let $w_i:=w_i^+$ to declutter notation. 

\begin{lem}\cite[\S 4.1]{ys}\label{lspathdes}
     A path $\pi = (\bar{w},\bar{a})$ is an LS path of shape $\Lambda_0$ if and only if
\begin{equation}\label{eq:wseq}
\bar{w}:\; w_{m} > w_{m-1} > \cdots >w_{n+1} > w_{n}  \;\;\;\; (m\geq n \geq 0)
\end{equation}
\begin{equation}\label{ras}
\bar{a} :\; 0 < \frac{i_m}{m} < \frac{i_{m-1}}{m-1}< \cdots < \frac{i_{n+1}}{n+1}<1
\end{equation}  
where $i_j\in\{ 1,2,...,j-1 \}$ for $n+1 \leq j \leq m$. 
Further, the tuple $(i_m,i_{m-1},...,i_{n+1})$ in \eqref{ras} satisfies the above inequalities if and only if:
\begin{equation}\label{lsinq}
 1 \leq i_m \leq i_{m-1} \leq ... \leq i_{n+1} \leq n.
\end{equation}
\end{lem}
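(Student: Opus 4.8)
\medskip
\noindent\textbf{Proof plan.}\quad
The plan is to deduce both assertions directly from Littelmann's definition of LS paths (the integrality conditions recalled in the Appendix), using only the total order on $W/W_{\Lambda_0}$ from \eqref{eq:bruhat} and the coroot pairings \eqref{aglev}. The argument splits into two independent pieces: (A) determining exactly which sequences $(\bar w,\bar a)$ are LS paths of shape $\Lambda_0$, which is the first assertion, and (B) the purely arithmetic equivalence between \eqref{ras} and \eqref{lsinq}.

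For (A), I would first collect the structural facts used throughout. By \eqref{stb0} the minimal coset representative $w_j=w_j^+$ has length $j$ in $W/W_{\Lambda_0}$ and satisfies $w_{j-1}=s_{i_j}w_j$ with $i_j\equiv j+1\pmod 2$; with \eqref{eq:bruhat} this shows the Bruhat order on $W/W_{\Lambda_0}$ is the total order $w_0<w_1<w_2<\cdots$, that for $p>q$ the unique saturated chain from $w_p$ to $w_q$ is $w_p>w_{p-1}>\cdots>w_q$ with each step a covering realized by the \emph{simple} reflection $s_{\alpha_{i_j}}$, and that a short parity check using \eqref{aglev} gives $\langle w_j\Lambda_0,\alpha_{i_j}^\vee\rangle=-j$ for all $j\ge1$. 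Now let $\pi=(\tau_1>\cdots>\tau_r;\ 0=a_0<a_1<\cdots<a_r=1)$ be an LS path of shape $\Lambda_0$ and write $\tau_i=w_{m_i}$, so $m_1>\cdots>m_r\ge0$. The $a_i$-chain for $(\tau_i,\tau_{i+1})$ must be the consecutive chain $w_{m_i}>w_{m_i-1}>\cdots>w_{m_{i+1}}$, so Littelmann's condition reads $a_i\langle w_k\Lambda_0,\alpha_{i_k}^\vee\rangle=-a_ik\in\mathbb{Z}$ for every integer $k$ with $m_{i+1}<k\le m_i$. Since $0<a_i<1$, the numbers $a_ik$ and $a_i(k+1)$ cannot both be integers, so this range of consecutive integers is a singleton, forcing $m_i=m_{i+1}+1$ for $1\le i\le r-1$. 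Putting $m:=m_1$ and $n:=m_r$ yields $\bar w$ as in \eqref{eq:wseq}; the integrality just found says $(m-i+1)a_i\in\mathbb{Z}$, and writing $a_i=i_{m-i+1}/(m-i+1)$ with $i_{m-i+1}\in\mathbb{Z}$ and using $0<a_i<1$ and $a_1<\cdots<a_{r-1}<a_r=1$ produces \eqref{ras} with $i_j\in\{1,\dots,j-1\}$. Conversely, every datum of the form \eqref{eq:wseq}--\eqref{ras} is an LS path of shape $\Lambda_0$: the $\bar w$ form a strictly decreasing chain in the totally ordered coset space, the $a_i$ are strictly increasing rationals in $(0,1)$ with $a_r=1$, and for each consecutive pair the one-step chain $w_{m-i+1}>w_{m-i}$ is a valid $a_i$-chain because $a_i\langle w_{m-i+1}\Lambda_0,\alpha_{i_{m-i+1}}^\vee\rangle=-i_{m-i+1}\in\mathbb{Z}$.

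For (B), the crux is the elementary observation that, for an integer $l\ge2$ and integers $i_{l-1},i_l$ with $1\le i_l\le l-1$, one has $\tfrac{i_l}{l}<\tfrac{i_{l-1}}{l-1}$ if and only if $i_l\le i_{l-1}$: clearing the positive denominators, the inequality is $(i_l-i_{l-1})\,l<i_l$, and since $i_l\le l-1<l$ the integer $i_l-i_{l-1}$ cannot be $\ge1$, so $i_l\le i_{l-1}$; conversely $i_l\le i_{l-1}$ gives $i_l(l-1)=i_ll-i_l\le i_{l-1}l-i_l<i_{l-1}l$. Applying this to each consecutive pair of fractions in \eqref{ras}, and observing that the two outermost inequalities in \eqref{ras} are precisely $i_m\ge1$ and $i_{n+1}\le n$, one gets \eqref{lsinq}; conversely \eqref{lsinq} forces $1\le i_j\le n\le j-1$ for all $j\in\{n+1,\dots,m\}$, so the observation applies at every step and recovers \eqref{ras}.

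I expect the only genuinely delicate point to lie in (A): pinning down the normalization of Littelmann's integrality condition from the Appendix and matching its sign and magnitude against \eqref{aglev}, and then the one non-formal step, namely that $0<a_i<1$ collapses the directions of an LS path of shape $\Lambda_0$ to a run of consecutive $w_j$. Everything else, including all of (B) and the converse direction, is routine.
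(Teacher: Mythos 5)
Your proposal is correct, but there is nothing in the paper to compare it against: Lemma~\ref{lspathdes} is stated with the citation \cite[\S 4.1]{ys} and the paper supplies no proof of its own, deferring entirely to Sanderson. Your argument is the natural direct derivation from Littelmann's definition and is sound. Part (A) correctly exploits the two special features of this situation: the Bruhat order on $W/W_{\Lambda_0}$ is a total order with $\ell(w_j^+)=j$, so any $a_i$-chain between two directions must pass through every intermediate $w_k$, and the covering $w_k\gtrdot w_{k-1}$ is realized by the unique positive real root proportional to $w_k\Lambda_0-w_{k-1}\Lambda_0$, namely the simple root $\alpha_{i_k}$ with $\langle w_k\Lambda_0,\alpha_{i_k}^\vee\rangle=-k$ by \eqref{aglev}; the observation that $a_ik,\,a_i(k+1)\in\mathbb{Z}$ would force $a_i\in\mathbb{Z}$ is exactly what rules out skips and yields the contiguity in \eqref{eq:wseq}, and the single surviving integrality condition $a_ij\in\mathbb{Z}$ gives \eqref{ras}. (The one point worth making explicit, which you use implicitly, is the uniqueness of the reflection realizing each covering in $W/W_{\Lambda_0}$ — otherwise the ``only if'' direction would have to consider chains built from other reflections; for $\sltwohat$ this is immediate since no two distinct positive real roots are proportional.) Part (B) — clearing denominators in $\tfrac{i_l}{l}<\tfrac{i_{l-1}}{l-1}$ to get $(i_l-i_{l-1})l<i_l$ and using $i_l\le l-1$ — is exactly the elementary equivalence between \eqref{ras} and \eqref{lsinq}, and your converse correctly checks that \eqref{lsinq} forces $i_j\le n\le j-1$ so the equivalence applies at every step.
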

It is important to note here that the Weyl group elements involved form a contiguous sequence, with no skips.

\begin{rem}\label{rem} The LS paths of shape $\Lambda_1$ also have a similar description - we only have to replace $w_i= w_i^+$ by $w_i^-$  in \eqref{eq:wseq}.
\end{rem}

\subsection{Crystal isomorphism}\label{s:cryiso}
 We will define a map $\Psi$ from the set of $2$-\emph{regular} charged partitions $\mathcal{Y}(0)$ to the set of LS paths $P_{\Lambda_0}$ of shape $\Lambda_0$. Let  $b = (\mu_1 > \mu_2 > \cdots >\mu_l >0) \in \mathcal{Y}(0)$  be a partition into distinct parts (i.e., $2$-\emph{regular}); thus $\mu_1 \geq l$. The dimensions $m \times n$ of the smallest rectangle that bounds $b$ are thus $m = \mu_1$ and $n=l$. Associate a partition $\widetilde{b}=(\mu_1-n, \mu_2-(n-1),...,\mu_l-1)$. Its conjugate partition $\widetilde{b}'$ has exactly $\mu_1 -n = m-n$ parts, which we denote $(i_{n+1},i_{n+2},...,i_m)$. Now define the LS path $\Psi(b) = (\bar{w},\bar{a})$ as follows:
 \begin{equation}
\bar{w}: w_{m} > w_{m-1} > ... >w_{n+1} > w_{n}
\end{equation}
\begin{equation}
\bar{a} :  0 < \frac{i_m}{m} < \frac{i_{m-1}}{m-1}<...< \frac{i_{n+1}}{n+1}<1.
\end{equation}
Then $\Psi$ is well defined. Clearly $m\geq n\geq 0$, and $i_m \geq 1$. Since 
\[i_{n+1} = \#\{1 \leq j \leq l: \mu_j-(n-j+1) \geq 1\},\] we conclude $i_{n+1}\leq l=n$. Hence we see that the $(m-n)$ tuple $( i_{m},i_{m-1},\ldots,i_{n+1})$ satisfies the inequalities  $1\leq i_{m} \leq i_{m-1} \leq ...\leq i_{n+1} \leq n $. By the discussion preceding \eqref{lsinq}, we conclude that $\Psi(b)$ is indeed an LS path of shape $\Lambda_0$. Note that if $b=\Phi$ (the empty partition) then $\Psi(b) = (1;0<1)$ is the straight line path from $0$ to $\Lambda_0$.

It is easy to see (using \eqref{lsinq}) that the map $\Psi$ is a bijection; the inverse map can be easily defined.
%
The key property of $\Psi$ is the following:
\begin{theorem}\label{cryiso}
The map $\Psi: \mathcal{Y}(0) \rightarrow P_{\Lambda_0} $ is a crystal isomorphism.
\end{theorem}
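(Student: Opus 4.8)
The plan is to show that $\Psi$ commutes with the root operators $e_i, f_i$ and is compatible with the weight, $\varepsilon_i$, and $\varphi_i$ functions; since $\Psi$ is already established to be a weight-preserving bijection, it then follows automatically that it is a crystal isomorphism. (Alternatively, one can invoke the uniqueness of the connected crystal $B(\Lambda_0)$: since $\mathcal{Y}(0)$ and $P_{\Lambda_0}$ are both connected normal crystals isomorphic to $B(\Lambda_0)$, any weight-preserving bijection sending highest weight element to highest weight element that intertwines the operators is an isomorphism. But the cleanest route is a direct check.) First I would verify the weight compatibility: by \S\ref{ss:weightcyd} the weight of $b = (\mu_1 > \cdots > \mu_l > 0)$ is $\Lambda_0 - n_0\alpha_0 - n_1\alpha_1$ where $n_0, n_1$ count the $0$- and $1$-labelled boxes, while the weight of the LS path $\Psi(b) = (\bar w, \bar a)$ is $\sum_j (a_j - a_{j-1}) w_j \Lambda_0$; using the explicit pairings \eqref{aglev} and the translation between the $\mu_j$ and the data $(m, n, i_{n+1}, \ldots, i_m)$, one checks these agree. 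This is a finite bookkeeping computation.

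Next, the heart of the argument: I would analyze how a root operator $f_i$ (say) acts on the charged partition side versus the LS path side, and match them up. On the charged partition side, $f_i$ adds a box at the bottom of the column indexed by the rightmost $+$ in the reduced $i$-signature (\S\ref{ss:rootops}); by Lemma~\ref{r:rootpreserve} this changes the weight by $-\alpha_i$ and is a partial inverse to $e_i$. On the LS path side, Littelmann's root operators act on the piecewise-linear path $\Psi(b): [0,1] \to \lie h^*$ by the well-known recipe involving the function $t \mapsto \langle \Psi(b)(t), \alpha_i^\vee\rangle$, its minimum, and the corresponding cut-and-reflect operation on the segments; for paths of shape $\Lambda_0$ in the Sanderson description (Lemma~\ref{lspathdes}) this has a concrete combinatorial effect on the tuple $(m, n, i_{n+1}, \ldots, i_m)$. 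The key step is to show that the combinatorial effect of $f_i$ on the diagram of $b$ — described in terms of the $i$-signature obtained by scanning columns — is carried by $\Psi$ to precisely the combinatorial effect of Littelmann's $f_i$ on the sequence $(\bar w, \bar a)$. Concretely I would: (i) translate the $i$-signature of $b$ (the string of $+$'s and $-$'s from $i$-addable/$i$-removable columns) into a statement about the integrality data of $\Psi(b)$, using that successive $w_j$'s in \eqref{eq:wseq} form a contiguous chain and that $\langle w_j \Lambda_0, \alpha_i^\vee\rangle$ alternates in sign with a controlled magnitude by \eqref{aglev}; (ii) check that ``reducing'' the $i$-signature (deleting $-+$ pairs, \S\ref{ss:sigred}) corresponds exactly to the cancellation in the integral of $\langle \Psi(b)(t), \alpha_i^\vee \rangle$ that Littelmann's rule performs; (iii) conclude that the column of the rightmost surviving $+$ maps to the segment of the path where the reflection operation is applied, so that $\Psi(f_i b) = f_i \Psi(b)$, with the same statement for $e_i$ and the kill conditions matching because $\varepsilon_i, \varphi_i$ are read off the same reduced signature on both sides (Lemma~\ref{r:rootpreserve}\eqref{i:lr'''}, and the analogous property of LS paths).

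I expect step (ii)–(iii) — matching the \emph{signature-reduction} bookkeeping on charged partitions with the \emph{minimum/cancellation} bookkeeping in Littelmann's path operators — to be the main obstacle. The two descriptions look quite different on the surface (one is a left-to-right scan over columns of a Young diagram producing a bracketing of $\pm$ signs; the other is the geometry of a piecewise-linear path and where its coroot-pairing attains a running minimum), and one must set up the right dictionary between a column of $b$ and a linear piece of $\Psi(b)$, keeping careful track of which column contributes which sign and with what multiplicity (a single column can contribute to the $i$-signature for only one value of $i$, but the reduction step is global). Once the dictionary is in place, the verification that reductions match is essentially the observation that deleting a $-+$ pair corresponds to a segment of $\Psi(b)(t)$ that returns to its starting coroot-value, which is exactly what Littelmann's algorithm discards; I would organize this as a lemma comparing the two ``reduced'' data directly. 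As a sanity check throughout, I would run the running example \eqref{x:cyd}: its reduced $0$-signature $+\,+\,-$ and reduced $1$-signature $+\,-$ should match, under $\Psi$, the values $\varphi_0 = 2, \varepsilon_0 = 1, \varphi_1 = 1, \varepsilon_1 = 1$ computed from the LS path $\Psi(8+6+3+1, 0)$, and the specific columns named in \S\ref{ss:rootops} should map to the expected segments.
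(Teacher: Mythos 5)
Your proposal takes essentially the same route as the paper's appendix proof: a direct verification that $\Psi(f_i b) = f_i\Psi(b)$, built on a dictionary between the $i$-signature of $b$ and the breakpoint values of $h(t)=\langle\Psi(b)(t),\alpha_i^\vee\rangle$ computed via \eqref{aglev} — this dictionary is exactly the paper's Lemma~\ref{strglem}, and your observation that a reducible $-\,+$ pair corresponds to a path segment returning to its starting coroot-value is precisely what the paper's case-by-case inequalities implement to locate the rightmost minimum of $h$ at the column receiving the new box. The plan is sound and matches the paper's strategy; what remains is the bookkeeping you correctly identify as the main labor.
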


\begin{rem}\label{cryisorem}
There is also a crystal isomorphism $\Psi': \mathcal{Y}(1) \rightarrow P_{\Lambda_1}$, defined analogously to $\Psi$.
\end{rem}
The proof of Theorem~\ref{cryiso} appears in the appendix.

\begin{rem}\label{prop:takeaway}
The key takeaway from this isomorphism is this: the initial and final directions of the LS path are $w_m, w_n$ where $m, n$ are the dimensions of the bounding rectangle of the charged partition.
\end{rem}

\section{Decomposition of Kostant-Kumar modules}\label{deckk}
We recall the definition of Kostant-Kumar modules from the introduction.
Let $\lambda, \mu$ be dominant integral weights and $w$ be a Weyl group element of a symmetrizable Kac-Moody algebra $\lie g$. Let $v_\lambda$ be a highest weight vector of $V(\lambda)$ and $v_{w\mu}$ be a non-zero weight vector of weight $w\mu$ of $V(\mu)$. The \emph{Kostant-Kumar module} denoted by $K(\lambda,w,\mu) = U\lie{g}(v_\lambda \otimes v_{w\mu})$ is the cyclic submodule of $V(\lambda)\otimes V(\mu)$ generated by the vector $v_\lambda \otimes v_{w\mu}$.

\subsection{Decomposition of tensor products for $\widehat{\lie{sl}_2}$}\label{dectp}
  For the affine Lie algebra $\widehat{\lie{sl}_2}$, we will recall the explicit decomposition of tensor products of level 1 modules, and the generating function of outer multiplicities given in Kac \cite{kac} and Misra-Wilson \cite{mw}. Let $\Lambda_i$ for $i=0,1$ be the fundamental weights of $\widehat{\lie{sl}_2}$.
  
\begin{prop}\label{prdec}

 The tensor product $V(\Lambda_i)\otimes V(\Lambda_0)$  for $i =0,1$ decomposes as follows into irreducibles:
\begin{equation}\label{deceq1}
V(\Lambda_0)\otimes V(\Lambda_0) = \sum_{n \geq 0} a_n V(2\Lambda_0-n\delta)+ \sum_{n \geq 0} b_n V(2\Lambda_0-\alpha_0-n\delta)
\end{equation}
 where the outer multiplicities $a_n, b_n$ are given by following generating function:
 \begin{equation}\label{gen1}
 \sum_{n\geq 0} a_n x^{2n} + \sum_{n\geq 0} b_n x^{2n+1} = \prod_{\substack{j \geq 1 \\ j \text{ odd }}}(1+x^{j})
 \end{equation}
 Similarly, the other tensor product decomposes thus: 
\begin{equation}\label{deceq2}
V(\Lambda_1)\otimes V(\Lambda_0) = \sum_{n \geq 0} a_n V(\Lambda_0+\Lambda_1 -n\delta)
\end{equation}
 where the $a_n$ are given by following generating function:
 \begin{equation}\label{gen2}
 \sum_{n\geq 0} a_n x^{2n}  = \prod_{\substack{j \geq 1 \\ j \text{ even }}}(1+x^{j})
 \end{equation}
 \end{prop}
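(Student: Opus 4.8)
The plan is to recover Proposition~\ref{prdec} --- originally due to Kac and Peterson --- via the crystal of charged partitions, following Misra--Wilson \cite{mw}; this is also the version of the argument that later gets refined (by the length of $w$) to the $w$-truncated statements. The first step is to pin down which summands can occur. Restricting a level-$1$ module to $\lie{sl}(2,\bc)$, the $\lie{sl}_2$-weights of $\vlambdanot$ lie in the root lattice while those of $\vlambdaone$ lie in its nontrivial coset; since level is additive under tensor product, $\vlambdanot\otimes\vlambdanot$ can contain only level-$2$ irreducibles whose highest weights lie in the coset of $2\Lambda_0$ modulo $\bz\delta$, and $\vlambdaone\otimes\vlambdanot$ only those in the coset of $\Lambda_0+\Lambda_1$. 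As the dominant level-$2$ weights are $2\Lambda_0$, $\Lambda_0+\Lambda_1$, $2\Lambda_1$ modulo $\bz\delta$, and $2\Lambda_1=2\Lambda_0+\delta-\alpha_0$, this already forces the shapes \eqref{deceq1} and \eqref{deceq2}; it remains to compute the outer multiplicities.

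By Littelmann's tensor product theorem (Theorem~\ref{decompthm} with the restriction on the initial direction removed; $\afsl$ has symmetric Cartan matrix), the outer multiplicity of $V(\Lambda_i+\weit{\eta})$ in $V(\Lambda_i)\otimes V(\Lambda_0)$ is the number of $\Lambda_i$-dominant LS paths $\eta$ of shape $\Lambda_0$ of the given weight. Transporting this across the crystal isomorphism $\Psi$ of Theorem~\ref{cryiso}, I would reformulate it inside $\mathcal{Y}(0)$: the highest weight vertices of $V(\Lambda_i)\otimes V(\Lambda_0)$ are indexed by the charged partitions $b\in\mathcal{Y}(0)$ with $\Psi(b)$ being $\Lambda_i$-dominant, i.e.\ with $\varepsilon_j(b)\le\langle\Lambda_i,\alpha_j^\vee\rangle$ for $j=0,1$; explicitly this reads $\varepsilon_1(b)=0$, $\varepsilon_0(b)\le1$ when $i=0$, and $\varepsilon_0(b)=0$, $\varepsilon_1(b)\le1$ when $i=1$. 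The corresponding summand is $V(\Lambda_i+\weit{b})=V\bigl(\Lambda_0+\Lambda_i-n_0\alpha_0-n_1\alpha_1\bigr)$, where $n_0,n_1$ are the numbers of $0$- and $1$-labelled boxes in the diagram of $b$.

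The crux is the combinatorial identification of the charged partitions $b\in\mathcal{Y}(0)$ satisfying these signature inequalities. Using the description of reduced signatures and root operators (Lemmas~\ref{l:reducible},~\ref{l:reduction} and the remarks around~\S\ref{ss:rootops}), together with Sanderson's shape of LS paths (Lemma~\ref{lspathdes}), I expect one finds that for $i=0$ these are exactly the $2$-regular charged partitions of charge $0$ all of whose (distinct) parts are odd, and for $i=1$ those all of whose parts are even. For such a $b$ with $l$ parts and $N:=n_0+n_1$ boxes, a direct count gives $N\equiv l\pmod2$, and $n_0-n_1$ equals $0$ when $l$ is even and $1$ when $l$ is odd; hence for $i=0$ even $N$ yields the summand $V(2\Lambda_0-\tfrac N2\delta)$ (contributing to $a_{N/2}$) and odd $N$ yields $V(2\Lambda_1-\tfrac{N+1}2\delta)=V(2\Lambda_0-\alpha_0-\tfrac{N-1}2\delta)$ (contributing to $b_{(N-1)/2}$), while for $i=1$ one always has $N$ even and the summand $V(\Lambda_0+\Lambda_1-\tfrac N2\delta)$. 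Since these $b$ are determined by their underlying partitions, summing $x^N$ over them produces exactly the Euler products $\prod_{j\ \mathrm{odd}}(1+x^j)$ (for $i=0$) and $\prod_{j\ \mathrm{even}}(1+x^j)$ (for $i=1$), the generating functions for partitions into distinct odd, respectively distinct even, parts --- that is, \eqref{gen1} and \eqref{gen2}. I expect the main obstacle to be precisely this step: showing that the two signature inequalities force the parity condition on the parts, which amounts to a careful analysis of the removable and addable columns of a $2$-regular diagram and of how signatures reduce.

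As a consistency check, the same multiplicities drop out of the Weyl--Kac character formula (Kac's original route): the level-$1$ characters of $\afsl$ are governed by the single string function $\eta(q)^{-1}$, so a product of two of them re-expands in level-$2$ characters via the coset $\widehat{\lie{sl}_2}_1\times\widehat{\lie{sl}_2}_1\supset\widehat{\lie{sl}_2}_2\times\mathrm{Vir}_{c=1/2}$, whose branching functions are the Ising characters $\tfrac12\bigl(\prod_{n\ge1}(1+q^{\,n-\frac12})\pm\prod_{n\ge1}(1-q^{\,n-\frac12})\bigr)$; the sum of the two feeding the $2\Lambda_0$-coset is $\prod_{n\ge1}(1+q^{\,n-\frac12})$, which after $q=x^2$ is \eqref{gen1} with the parity of the exponent separating the $a_n$ and $b_n$ contributions, and a parallel computation (now with $q\mapsto q^2$) gives \eqref{gen2}. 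Either way the analytic input reduces to Euler's product identities.
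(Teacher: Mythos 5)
Your proposal is correct and is essentially the approach the paper endorses: Proposition~\ref{prdec} is quoted from Kac--Peterson and Misra--Wilson, and the crystal-of-charged-partitions argument you sketch is exactly the one the paper carries out in \S\ref{kkdcmp} for the truncated versions (Propositions~\ref{kkd0} and~\ref{kkd1}), of which the present statement is the ``$m\to\infty$'' case. The step you flag as the main obstacle --- that the signature conditions force partitions into distinct odd (resp.\ even) parts --- is precisely Lemma~\ref{l0domb}, quoted from \cite[Lemma~3.3]{mw}, after which your weight bookkeeping and Euler products reproduce \eqref{wtb}, \eqref{wtbb}, \eqref{gen1} and \eqref{gen2}.
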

We remark that while these were originally proved using the Kac-Weyl character formula, Misra-Wilson employ the crystal of charged partitions to obtain a new proof. It is this latter proof that generalizes to the case of Kostant-Kumar modules, as we describe next.

\subsection{}
 For each Weyl group element $w$, we now compute the outer multiplicities of the Kostant-Kumar modules $K(\Lambda_i,w,\Lambda_0)$ for $i=0,1$. The corresponding generating functions turn out to be truncations of the generating functions \eqref{gen1} and \eqref{gen2} depending on the length of $w$.
  
  When $\ell(w)=0$, i.e., $w=1$, we have:
  \begin{equation}
  K(\Lambda_0,1,\Lambda_0) \cong V(2\Lambda_0)
  \end{equation}
  
  \begin{equation}
  K(\Lambda_1,1,\Lambda_0) \cong V(\Lambda_0 + \Lambda_1) \cong K(\Lambda_1,s_0,\Lambda_0)
  \end{equation}
  
  In the following two propositions, we will only take $w=w_m^+$. This suffices because for $m>0$,
  $w_m^- \in w_{m-1}^+ W_{\Lambda_0}$ and so $K(\Lambda_0,w_m^-,\Lambda_0) = K(\Lambda_0,w_{m-1}^+,\Lambda_0)$.
Recall here again that $w_m:=w_m^+$. 
 \begin{prop}\label{kkd0}
Let $m>0$, then the decomposition of  $K(\Lambda_0,w_m,\Lambda_0)$ is:
\begin{equation}\label{kdeceq1}
K(\Lambda_0,w_m,\Lambda_0) = \sum_{n \geq 0} a_n V(2\Lambda_0-n\delta)+ \sum_{n \geq 0} b_n V(2\Lambda_0 - \alpha_0 -n\delta)
\end{equation}
where the multiplicities $a_n, b_n$ are given by the generating function:
\begin{equation}\label{kgen1}
    \sum_{n\geq 0} a_n x^{2n} + \sum_{n\geq 0} b_n x^{2n+1} = \prod_{\substack{1 \leq \, j \leq \, m \\ j \text{ odd }}} (1+x^{j})
\end{equation}
\end{prop}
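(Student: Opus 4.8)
The plan is to read the decomposition straight off Joseph's theorem and then transport everything to the charged-partition model via the isomorphism $\Psi$. Since $\afsl$ has a symmetric Cartan matrix, Theorem~\ref{decompthm} applies and gives
\[
K(\Lambda_0,w_m,\Lambda_0)=\bigoplus_{\pi\in P_{\Lambda_0}^{\Lambda_0}(w_m)}V(\Lambda_0+\weight\pi),
\]
where $P_{\Lambda_0}^{\Lambda_0}(w_m)$ is the set of $\Lambda_0$-dominant LS paths of shape $\Lambda_0$ whose initial direction is $\le w_m$. Applying the weight-preserving crystal isomorphism $\Psi\colon\mathcal{Y}(0)\to P_{\Lambda_0}$ of Theorem~\ref{cryiso}, this rewrites as $\bigoplus_{b}V(\Lambda_0+\weight b)$, the sum over those $b\in\mathcal{Y}(0)$ for which $\Psi(b)$ is $\Lambda_0$-dominant and has initial direction $\le w_m$. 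By Remark~\ref{prop:takeaway} the initial direction of $\Psi(b)$ is $w_{\mu_1}$, with $\mu_1$ the largest part of $b$, and since the Bruhat order on $W/W_{\Lambda_0}$ is the total order \eqref{eq:bruhat}, the condition ``initial direction $\le w_m$'' is exactly ``$\mu_1\le m$''. So the proof reduces to (i) identifying which $b\in\mathcal{Y}(0)$ are $\Lambda_0$-dominant, (ii) evaluating $\Lambda_0+\weight b$ for such $b$, and (iii) counting those with $\mu_1\le m$.

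The crux is (i): I claim the $\Lambda_0$-dominant charged partitions in $\mathcal{Y}(0)$ are exactly those all of whose parts are odd (equivalently, the partitions into distinct odd parts, $2$-regularity being automatic). I would establish this through the turning-point criterion for $\Lambda_0$-dominance: writing $\Psi(b)=(w_{\mu_1}>w_{\mu_1-1}>\cdots>w_l;\bar a)$ as in Lemma~\ref{lspathdes}, pair $\Lambda_0$ plus each turning point against $\alpha_0^\vee$ and $\alpha_1^\vee$ using the explicit values in \eqref{aglev}, and check that all the resulting inequalities hold precisely when every part $\mu_r$ of $b$ is odd. (For instance, dominance at the first turning point $\tfrac{i_{\mu_1}}{\mu_1}\,w_{\mu_1}\Lambda_0$ already forces $\mu_1$ to be odd by \eqref{aglev}; continuing down the path constrains the remaining parts.) Equivalently --- and this is the route that mirrors Misra--Wilson's proof of Proposition~\ref{prdec} --- the $\Lambda_0$-dominant paths of shape $\Lambda_0$ correspond to the highest-weight elements of $\mathcal{Y}(0)\otimes\mathcal{Y}(0)$, namely the elements $(\Phi,0)\otimes b$ with $\varepsilon_0(b)\le 1$ and $\varepsilon_1(b)=0$, and an inspection of the $0$- and $1$-signatures of a charged partition shows this pair of conditions is equivalent to all parts of $b$ being odd. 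I expect this combinatorial characterization to be the main obstacle; the rest is bookkeeping.

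For (ii), if $b$ is a charge-$0$ partition into distinct odd parts then counting the labels in its diagram gives $\weight b=\Lambda_0-n_0\alpha_0-n_1\alpha_1$ with $n_0+n_1=|b|$ and $n_0-n_1\in\{0,1\}$, $n_0-n_1\equiv|b|\pmod 2$; hence $\Lambda_0+\weight b=2\Lambda_0-n\delta$ when $|b|=2n$ and $2\Lambda_0-\alpha_0-n\delta$ when $|b|=2n+1$. Combining with (iii), the multiplicity of $V(2\Lambda_0-n\delta)$ (respectively $V(2\Lambda_0-\alpha_0-n\delta)$) in $K(\Lambda_0,w_m,\Lambda_0)$ is the number of partitions of $2n$ (respectively $2n+1$) into distinct odd parts each $\le m$, and summing over $n$ yields \eqref{kgen1}, since $\prod_{1\le j\le m,\,j\text{ odd}}(1+x^j)$ is precisely $\sum_b x^{|b|}$ over partitions $b$ into distinct odd parts with largest part at most $m$. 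As a consistency check, dropping the constraint on the initial direction (letting $m\to\infty$) recovers \eqref{gen1} and hence Proposition~\ref{prdec}; the hypothesis $m>0$ merely sets aside the base case $w=1$, treated separately above.
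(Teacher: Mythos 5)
Your proposal is correct and follows essentially the same route as the paper: Joseph's theorem, transport through $\Psi$ using Remark~\ref{prop:takeaway} and the total order \eqref{eq:bruhat}, the characterization of $\Lambda_0$-dominant charged partitions as partitions into distinct odd parts (which the paper simply cites from Misra--Wilson as Lemma~\ref{l0domb} rather than reproving), the weight formula \eqref{wtb}, and the resulting truncated product. The only divergence is that you sketch a proof of the dominance characterization where the paper cites it, and your second sketch is precisely the Misra--Wilson argument.
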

 
 \begin{prop}\label{kkd1}
Let $m>1$, then the decomposition of  $K(\Lambda_1,w_m,\Lambda_0)$ is:
\begin{equation}\label{kdeceq2}
K(\Lambda_1,w_m,\Lambda_0) = \sum_{n \geq 0} a_n V(\Lambda_0+\Lambda_1 -n\delta)
\end{equation}
 where the multiplicities $a_n$ are given by the generating function:
 \begin{equation}\label{kgen2}
 \sum_{n\geq 0} a_n x^{2n} = \prod_{\substack{1 \leq \, j \leq \, m \\ j \text{ even }}} (1+x^{j})
 \end{equation} 
 \end{prop}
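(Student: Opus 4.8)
The plan is to apply Joseph's Theorem~\ref{decompthm} with $\lambda=\Lambda_1$, $\mu=\Lambda_0$, $w=w_m$, and then transport everything through the crystal isomorphism $\Psi$ of Theorem~\ref{cryiso}. Theorem~\ref{decompthm} gives $K(\Lambda_1,w_m,\Lambda_0)=\bigoplus_\pi V(\Lambda_1+\weight\pi)$, the sum being over $\Lambda_1$-dominant LS paths $\pi$ of shape $\Lambda_0$ whose initial direction is $\le w_m$. Under $\Psi$ such $\pi$ correspond to $2$-regular charged partitions $b$ of charge $0$, and by Remark~\ref{prop:takeaway} the initial direction of $\Psi(b)$ is $w_{\mu_1(b)}$, where $\mu_1(b)$ is the largest part of $b$; since the Bruhat order \eqref{eq:bruhat} is total, the condition ``initial direction $\le w_m$'' is simply $\mu_1(b)\le m$. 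So the proposition amounts to enumerating, by weight, the $2$-regular charged partitions of charge $0$ that are $\Lambda_1$-dominant and satisfy $\mu_1(b)\le m$.

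Next I would restate $\Lambda_1$-dominance intrinsically. For any path $\pi$ of shape $\Lambda_0$, the minimum of $t\mapsto\langle\pi(t),\alpha_i^\vee\rangle$ on $[0,1]$ equals $-\varepsilon_i(\pi)$ and, by piecewise linearity, is attained at a turning point; hence $\pi$ is $\Lambda_1$-dominant exactly when $\varepsilon_i(\pi)\le\langle\Lambda_1,\alpha_i^\vee\rangle$ for $i=0,1$, that is, $\varepsilon_0(\pi)=0$ and $\varepsilon_1(\pi)\le 1$. As $\Psi$ is a crystal isomorphism, $\varepsilon_i(\Psi(b))=\varepsilon_i(b)$, so we must enumerate the $2$-regular charged partitions $b$ of charge $0$ with $\varepsilon_0(b)=0$, $\varepsilon_1(b)\le 1$ and $\mu_1(b)\le m$.

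The combinatorial heart is the claim that a $2$-regular charged partition of charge $0$ has $\varepsilon_0=0$ and $\varepsilon_1\le 1$ if and only if all of its parts are even. For the ``if'' direction I would compute the $0$- and $1$-signatures of a charge-$0$ diagram with distinct even parts $\mu_1>\dots>\mu_l$ directly: the only columns that are $i$-addable or $i$-removable (for either $i$) are column $1$, the columns $\mu_r$, and the columns $\mu_r+1$ (the last of these being the empty rightmost column when $r=1$), and a parity bookkeeping over $r$ shows that after reduction the $0$-signature consists of plus signs only, while the $1$-signature reduces either to the empty string or to a single $+\,-$; thus $\varepsilon_0=0$ and $\varepsilon_1\le 1$. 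For the converse I would avoid a direct argument and instead count. Any $\Lambda_1$-dominant $b$ has $\Lambda_1+\weight b$ dominant (this weight is the last turning point of $\Psi(b)$), which forces $b$ to have equally many $0$'s and $1$'s, hence $2n$ boxes for some $n\ge 0$; and by the decomposition of the full tensor product $V(\Lambda_1)\otimes V(\Lambda_0)$ (Proposition~\ref{prdec}, re-derived through Theorem~\ref{decompthm} with no bound on the initial direction and through $\Psi$) the number of such $b$ with $2n$ boxes is exactly $a_n$, the number of partitions of $n$ into distinct parts. On the other hand the charge-$0$ charged partitions with distinct even parts and $2n$ boxes also number $a_n$ (halve each part), and by the ``if'' direction these form a subset of the $\Lambda_1$-dominant ones; comparing cardinalities forces the two families to coincide.

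With the characterization in hand the proof finishes immediately: a charge-$0$ charged partition with distinct even parts, all $\le m$, and $2n$ boxes contributes the summand $V(\Lambda_0+\Lambda_1-n\delta)$, and the generating function of such partitions, graded by number of boxes, is $\prod_{k\,:\,2k\le m}(1+x^{2k})=\prod_{1\le j\le m,\ j\text{ even}}(1+x^j)$, which is exactly \eqref{kgen2}; since the multiplicity of $V(\Lambda_0+\Lambda_1-n\delta)$ in $K(\Lambda_1,w_m,\Lambda_0)$ is the number of such partitions with $2n$ boxes, this yields \eqref{kdeceq2}. The main obstacle is the ``if'' half of the combinatorial claim---setting up and carrying through the signature computation and its reduction, especially the contribution of the empty rightmost column and the degenerate cases $l\le 1$; the counting argument above is what lets us bypass the more delicate converse. (Alternatively one could work entirely on the LS side using the explicit pairings \eqref{aglev}, but tracking $\Lambda_1$-dominance turning point by turning point there is appreciably more cumbersome.)
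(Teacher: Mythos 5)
Your proposal is correct and follows the same skeleton as the paper: apply Joseph's Theorem~\ref{decompthm}, transport through the crystal isomorphism $\Psi$, use Remark~\ref{prop:takeaway} together with the totality of the Bruhat order \eqref{eq:bruhat} to turn the initial-direction bound into $\mu_1(b)\le m$, characterize the $\Lambda_1$-dominant charge-$0$ charged partitions as those with distinct even parts, and read off the generating function from the weight formula. The one genuine point of divergence is how that characterization is obtained: the paper simply imports it as Lemma~\ref{l0domb} (Misra--Wilson), whereas you prove it yourself --- the ``if'' direction by a direct signature computation (which does check out: for distinct even parts the reduced $0$-signature is a single $+$ and the reduced $1$-signature is empty or $+\,-$, so $\varepsilon_0=0$ and $\varepsilon_1\le 1$), and the converse by a cardinality comparison against the full tensor product decomposition of Proposition~\ref{prdec}. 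Your counting argument is valid, and it neatly sidesteps the harder direction of the combinatorial lemma; the trade-off is that your proof of the $w$-truncated decomposition then logically depends on the untruncated one (Proposition~\ref{prdec}, established independently via the Weyl--Kac character formula), so unlike the paper's route it cannot be used to recover Proposition~\ref{prdec} as the large-$w$ limit. Everything else --- the observation that $\Lambda_1$-dominance forces $n_0=n_1$ and hence an even number of boxes, the equivalence of path-dominance with $\varepsilon_0=0$, $\varepsilon_1\le 1$, and the final product $\prod_{1\le j\le m,\ j\text{ even}}(1+x^{j})$ --- matches the paper's argument.
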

 
 To prove propositions \ref{kkd0} and \ref{kkd1}, we will use the crystal isomorphism between the set of $2$-\emph{regular} charged partitions $\mathcal{Y}(0)$ and the set of LS paths $P_{\Lambda_0}$ of shape $\Lambda_0$. 

\subsection{Proof}\label{kkdcmp}
We recall some basic notation and results from \cite{mw}. Given $b \in \mathcal{Y}(0)$, we say $b$ is $\Lambda_j$-dominant for $j\in \{0,1\}$ if $e_i^{\delta_{ij}+1}(b)=0$ for $i\in \{0,1\}$ where $\delta_{ij}$ is the Kronecker delta function. The description of $\Lambda_j$-dominant elements $b \in \mathcal{Y}(0)$ was given in \cite[Lemma 3.3]{mw} for all $\widehat{\lie{sl}_n}$. We will state here only the $\widehat{\lie{sl}_2}$ version, which suffices for our requirements.

\begin{lem}\label{l0domb} (Misra-Wilson) 
\textbf{a.} Let $b \in \mathcal{Y}(0)$, then $b$ is $\Lambda_0$-dominant if and only if $b$ is a partition into distinct odd parts.\\
\textbf{b.}  Let $b \in \mathcal{Y}(0)$, then $b$ is $\Lambda_1$-dominant if and only if $b$ is a partition into distinct even parts.
\end{lem}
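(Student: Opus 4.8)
My plan is to translate $\Lambda_j$-dominance into conditions on the functions $\varepsilon_i$, to read the two signatures of a charged partition directly off its parts, to get the easy implication from this, and to close the harder implication by a dimension count. First, by Lemma~\ref{r:rootpreserve}\eqref{i:lr'''}, $e_i^{\,k}(b)=0$ precisely when $\varepsilon_i(b)<k$; hence $b$ is $\Lambda_0$-dominant iff $\varepsilon_1(b)=0$ and $\varepsilon_0(b)\le1$, and $b$ is $\Lambda_1$-dominant iff $\varepsilon_0(b)=0$ and $\varepsilon_1(b)\le1$. Now write $b=(\mu_1>\mu_2>\cdots>\mu_l)\in\mathcal Y(0)$. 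Since the parts are distinct, the successive column heights of $b$ decrease by exactly $1$ at the columns numbered $\mu_1,\dots,\mu_l$ and are constant elsewhere; the $\mu_k$-th column has height $k$, and the $(\mu_k+1)$-st column has height $k-1$. From the filling rule for charge-$0$ diagrams one then reads off: the $i$-removable columns are exactly the $\mu_k$, with $\mu_k$ removable of colour $(\mu_k+k)\bmod 2$; the $i$-addable columns are exactly column $1$ (of colour $l\bmod 2$) together with the columns $\mu_k+1$ (of colour $(\mu_k+k+1)\bmod 2$) --- with the conventions that if $\mu_{k-1}=\mu_k+1$ the single column $\mu_k+1=\mu_{k-1}$ carries the removal at $\mu_{k-1}$ and the addition at $\mu_k$ simultaneously, in opposite colours, and that if $\mu_l=1$ column $1$ also carries the removal at $\mu_l$. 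This pins down the $0$- and $1$-signatures as explicit $\pm$-strings whose entries are indexed, left to right, by decreasing $k$.

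For the implication $(\Leftarrow)$: if all $\mu_k$ are odd (respectively all even) then no two part values are consecutive, so each column contributes at most one sign to each signature and the colours above depend only on $k\bmod 2$. Scanning the columns from left to right, the signs in each of the two signatures then alternate strictly; since a string of the form $(-+)(-+)\cdots$ is reducible by Lemma~\ref{l:reducible}\eqref{i:lemone'}, Lemma~\ref{l:reduction} shows that each signature reduces to a string of length at most $2$ of the permitted shape. A short case check on the parity of $l$ (and on whether $1$ is a part) then gives $\varepsilon_1(b)=0$ and $\varepsilon_0(b)\le1$ in the all-odd case, and $\varepsilon_0(b)=0$ and $\varepsilon_1(b)\le1$ in the all-even case (the empty partition being trivial). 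Thus a partition into distinct odd (respectively even) parts is $\Lambda_0$- (respectively $\Lambda_1$-) dominant.

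For the implication $(\Rightarrow)$ I would avoid dissecting the signature of a non-conforming partition and argue instead by counting. Since $\mathcal Y(i)$ is the crystal of $V(\Lambda_i)$, the tensor product rule for crystals gives $V(\Lambda_j)\otimes V(\Lambda_0)=\bigoplus V(\Lambda_j+\weight b)$, the sum over $\Lambda_j$-dominant $b\in\mathcal Y(0)$; comparing with Proposition~\ref{prdec} (whose proof via the Weyl--Kac character formula does not use this lemma) shows that the number of $\Lambda_0$-dominant $b$ of a given weight is the corresponding coefficient of $\prod_{j\ \mathrm{odd}}(1+x^j)$, and similarly for $\Lambda_1$-dominant and $\prod_{j\ \mathrm{even}}(1+x^j)$. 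But these coefficients also count partitions into distinct odd (respectively even) parts of the matching size: in a $2$-regular partition a row of even length contributes $0$ to $n_0-n_1$ and a row of odd length contributes $(-1)^{\mathrm{row}+1}$, so a partition into distinct odd parts has $n_0-n_1\in\{0,1\}$ and one into distinct even parts has $n_0=n_1$, which is exactly the range of weights appearing in Proposition~\ref{prdec}. Since $(\Leftarrow)$ already places every such partition inside the relevant dominant locus, and the two sets have equal finite cardinality in each weight, they coincide; this establishes both parts of the lemma.

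The step I expect to be the main obstacle is the explicit signature computation above together with the reduction bookkeeping in the last argument: correctly enumerating the addable and removable columns with their colours, and then verifying the alternation pattern and the resulting reductions, requires care with the parities of the integers $\mu_k+k$ and with the boundary columns $1$ and $\mu_1+1$ (and with the case $\mu_l=1$). Everything else is formal.
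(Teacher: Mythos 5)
The paper does not actually prove this lemma: it is imported verbatim from Misra--Wilson \cite[Lemma 3.3]{mw}, so there is no in-paper argument to compare yours against. Judged on its own, your proposal is correct, but it is worth separating its two halves. The translation of $\Lambda_j$-dominance into $\varepsilon_1(b)=0,\ \varepsilon_0(b)\le 1$ (resp.\ $\varepsilon_0(b)=0,\ \varepsilon_1(b)\le 1$) via Lemma~\ref{r:rootpreserve}\eqref{i:lr'''} is right, and your enumeration of addable/removable columns with their colours is exactly the content of the paper's Lemma~\ref{strglem} (specialized to $\widetilde b'=\myemptyset$, i.e.\ $i_{n+1}=\cdots=i_m$ absent when parts are non-consecutive). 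One small inaccuracy in $(\Leftarrow)$: the signatures do \emph{not} always alternate strictly. For a partition into distinct odd parts with an even number $l$ of parts, the $0$-signature is $+\,+\,-\,+\,-\cdots+\,-$, with a doubled $+$ at the left end coming from column $1$ and column $\mu_l+1$. This does not damage the argument --- deleting the $(-\,+)$ blocks and invoking Lemma~\ref{l:reduction} still yields reduced signatures $+\,+\,-$, $-$, $+\,+$, or empty (resp.\ $+$, $+\,-$ in the even case), which give the required bounds --- but the phrase ``alternate strictly'' should be replaced by the actual case analysis you allude to.

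Your $(\Rightarrow)$ direction is a genuinely different route from the direct one: instead of showing that a partition with an even part (resp.\ odd part) violates the $\varepsilon$-conditions, you count $\Lambda_j$-dominant elements weight-by-weight against the Kac--Peterson decomposition of Proposition~\ref{prdec} and use the inclusion from $(\Leftarrow)$ plus equal finite cardinalities. This is valid: the crystal tensor product rule and $\mathcal Y(0)\cong B(\Lambda_0)$ (Theorem~\ref{cryiso}) do not depend on the lemma, Proposition~\ref{prdec} has an independent character-formula proof, and your parity computation of $n_0-n_1$ row by row correctly matches the weight grading. What this buys is the avoidance of the fiddly contrapositive signature analysis. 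What it costs is that the lemma ceases to be an independent combinatorial fact: Misra--Wilson's point (echoed in \S\ref{dectp}) is precisely that this lemma \emph{yields} a crystal-theoretic proof of Proposition~\ref{prdec}, so anyone wanting that application must not derive the lemma from the proposition. Since the paper only uses the lemma as an input to the $w$-truncated decompositions, your argument suffices here, but you should state the dependence explicitly to keep the logical order clear.
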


Let $\mathcal{Y}(0)^{\Lambda_0}$ denote the collection of all partitions into distinct odd parts. Given $b \in \mathcal{Y}(0)^{\Lambda_0}$, we can determine its weight as follows:
\begin{equation}\label{wtb}
wt(b) = \left\lbrace \begin{array}{cc}
       \Lambda_0 - k\delta & \text{ if }  b \vdash 2k \\
       \Lambda_0 - k\delta - \alpha_0 & \text{ if }  b \vdash 2k+1

\end{array} \right.
\end{equation} 

Likewise, let $\mathcal{Y}(0)^{\Lambda_1}$ be the collection of all partitions into distinct even parts. For $b \in \mathcal{Y}(0)^{\Lambda_1}$, its weight is given by:
\begin{equation}\label{wtbb}
wt(b) = \Lambda_0 - k\delta \text{ if } b\vdash 2k.
\end{equation}

\subsection{}
We now consider the LS path model side of things. The decomposition rule for Kostant-Kumar modules was given in \cite{krv} in terms of LS paths. We will recall here some basic notation and results. Let $\pi$ be an LS path of shape $\mu$
\begin{equation}\label{kkdecomps}
\pi = \left\lbrace\begin{array}{c}
 \sigma_1>\sigma_2>\cdots >\sigma_r \\
0<a_1<a_2<\cdots <a_{r-1}<a_r=1. \end{array}\right.
 \end{equation}
Thus $\sigma_1$ is the {initial direction}, $\sigma_r$ is the {final direction} and $\weight \pi:=\sum_{i=1}^r (a_i - a_{i-1}) \sigma_i \mu$ is the  weight of the path $\pi$. For $0 \leq k \leq r$, recall that $\sum_{i=1}^k (a_i - a_{i-1}) \sigma_i \mu$ are the {\em turning points} of $\pi$. We say that $\pi$ is {$\lambda$-dominant} if $\lambda + \gamma$ lies in the dominant Weyl chamber for every turning point $\gamma$ of $\pi$.  Let $P_\mu^\lambda(\sigma)$ denote the set of all $\lambda$-dominant LS-paths of shape $\mu$ whose initial direction is $\leq \sigma$.

Now, by theorem ~\ref{decompthm}, the decomposition of $K(\Lambda_i,w_m,\Lambda_0)$ for $i\in \{0,1\}$ is given by:
\begin{equation}\label{eqkkdecl0}
K(\Lambda_i,w_m,\Lambda_0) = \bigoplus_{\pi \in P_{\Lambda_0}^{\Lambda_i}(w_m)} V(\Lambda_i+\weight\pi).
\end{equation}

Now we will prove propositions \ref{kkd0} and \ref{kkd1} using the decomposition \eqref{eqkkdecl0}, rewritten in terms of \emph{charged partitions}.  For $i\in \{0,1\}$, define sets
\begin{equation}\label{l0bm}
\mathcal{Y}(0)^{\Lambda_i}(m):= \{b=(\mu_1,\mu_2,...,\mu_l) \in  \mathcal{Y}(0)^{\Lambda_i}| \ \mu_1 \leq m\}
\end{equation}
Note that from the definition of the root operators $e_i$ on paths (see Appendix), it follows that $\pi$ is $\Lambda_j$-dominant for $j\in \{0,1\}$ iff $e_i^{\delta_{ij}+1}(b)=0$ for $i\in \{0,1\}$. It is now clear from Remark~\ref{prop:takeaway}, Lemma~\ref{l0domb} and the fact that $\Psi$ is a crystal isomorphism that
\begin{equation}\label{eq:cp}
  \Psi^{-1}(P_{\Lambda_0}^{\Lambda_i}(w_m)) = \mathcal{Y}(0)^{\Lambda_i}(m) \text{ for } i\in \{0,1\}
\end{equation}
\textbf{Proof of Proposition \ref{kkd0}}: Equations \eqref{eqkkdecl0} and \eqref{eq:cp} imply that:
\begin{equation}\label{kkdcyd}
K(\Lambda_0,w_m,\Lambda_0) = \bigoplus_{b \in \mathcal{Y}(0)^{\Lambda_0}(m)} V(\Lambda_0+ wt(b)).
\end{equation}
For $b \in \mathcal{Y}(0)^{\Lambda_0}(m)$, its weight is given by \eqref{wtb}:
\begin{equation}\label{wtl0b}
\Lambda_0 + wt(b) = \left\lbrace \begin{array}{cc}
       2\Lambda_0 - k\delta & \text{ if } b \vdash 2k \\
       2\Lambda_0 - \alpha_0 -k\delta & \text{ if } b \vdash 2k+1

\end{array} \right.
\end{equation}
For $k \geq 0$, let 
\begin{align*} a_k&:=\#\{b \in \mathcal{Y}(0)^{\Lambda_0}(m): b \vdash 2k\} \\b_k&:=\#\{b \in \mathcal{Y}(0)^{\Lambda_0}(m): b \vdash 2k+1\} \end{align*}  
 Then the decomposition \eqref{kkdcyd} can be written as follows:
 \begin{equation}\label{kdeceq3}
K(\Lambda_0,w_m,\Lambda_0) = \sum_{k \geq 0} a_k V(2\Lambda_0-k\delta)+ \sum_{k \geq 0} b_k V(2\Lambda_0-\alpha_0 -k\delta)
\end{equation}
If $m>0$, it follows readily from \eqref{l0bm} that the generating function for $a_k$ and $b_k$ is
\begin{equation*}
 \sum_{k\geq 0}a_k x^{2k} + \sum_{k\geq 0}b_k x^{2k+1} = \prod_{\substack{1 \leq \, j \leq \, m \\ j \text{ odd }}} (1+x^{j})
\end{equation*}\qed

\noindent
\textbf{Proof of Proposition \ref{kkd1}}: Equation \ref{eqkkdecl0} gives:
\begin{equation}\label{kkdcyd2}
K(\Lambda_1,w_m,\Lambda_0) = \bigoplus_{b \in \mathcal{Y}(0)^{\Lambda_1}(m)} V(\Lambda_1+ wt(b)).
\end{equation}
As above, for $b \in \mathcal{Y}(0)^{\Lambda_1}(m)$, we have by equation \eqref{wtbb}:
\begin{equation}\label{wtl0b2}
\Lambda_1 + wt(b) = \Lambda_0+\Lambda_1 - k\delta; \text{ if } b\vdash 2k.
\end{equation}
 Let $a_k:=\#\{b \in \mathcal{Y}(0)^{\Lambda_1}(m)| b \vdash 2k\}$. 
 Then \eqref{kkdcyd} can be written as follows:
 \begin{equation}\label{kdeceq4}
K(\Lambda_1,w_m,\Lambda_0) = \sum_{k \geq 0} a_k V(\Lambda_0+\Lambda_1-k\delta)
\end{equation}
If $m>1$, then \eqref{l0bm} implies that the generating function for $a_k$ is:
\begin{equation*}
 \sum_{k\geq 0}a_k x^{2k} = \prod_{\substack{1 \leq \, j \leq \, m \\ j \text{ even }}} (1+x^{j})
\end{equation*}\qed

\section{Crystals for Kostant-Kumar  modules}

\subsection{}
Given $b \in \mathcal{Y}(i)$, let $m(b)$ and $n(b)$ denote the horizontal and vertical lengths respectively of the bounding rectangle of $b$ for $i=0,1$. In other words, if  $b = (\mu_1,\mu_2,...,\mu_l)$, then  $m(b) = \mu_1$ and $n(b)=l$; we have $m(b) \geq n(b)$.

Let $p$ be $0$ or an odd number, the element $w^+_p$ is the minimal representative of its double coset $W_{\Lambda_0} w^+_p W_{\Lambda_0}$. For $p \neq 0$, let $\mathcal{K}_p$ denote the set of all pairs $(b_1, b_2)$ in $\mathcal{Y}(0)^2$ such that $m(b_2) - n(b_1) \leq p+1$. We define $\mathcal{K}_0$ to be the set of pairs satisfying $m(b_2) \leq n(b_1)$. 

\begin{prop}\label{kkcrys1}
 The set $\mathcal{K}_p$ is a crystal for $K(\Lambda_0,w_p^+,\Lambda_0)$. In other words, $\mathcal{K}_p$ is invariant under the $e_i, f_i$ and its character equals that of $K(\Lambda_0,w^+_p,\Lambda_0)$.
\end{prop}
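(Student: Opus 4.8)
The plan is to transport the statement to the Lakshmibai--Seshadri side via the crystal isomorphism $\Psi$ of Theorem~\ref{cryiso} and then invoke Theorem~\ref{pathmod}. Recall that the concatenation $P_{\Lambda_0}*P_{\Lambda_0}$ is, in Littelmann's theory, a realization of the crystal of $V(\Lambda_0)\otimes V(\Lambda_0)$, its root operators being given by the tensor product rule. Consequently the assignment $(b_1,b_2)\mapsto\Psi(b_1)*\Psi(b_2)$ is a crystal isomorphism $\mathcal{Y}(0)\times\mathcal{Y}(0)\to P_{\Lambda_0}*P_{\Lambda_0}$ when $\mathcal{Y}(0)\times\mathcal{Y}(0)$ is given the tensor product crystal structure (so that its root operators are exactly the $e_i,f_i$ of the proposition), and it preserves weights. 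Since, by Theorem~\ref{pathmod}, $P(\Lambda_0,w_p^+,\Lambda_0)$ is stable under the root operators and satisfies $\ch K(\Lambda_0,w_p^+,\Lambda_0)=\sum_{\eta\in P(\Lambda_0,w_p^+,\Lambda_0)}e^{\weight\eta}$, it suffices to prove that the preimage of $P(\Lambda_0,w_p^+,\Lambda_0)$ under this map is exactly $\mathcal{K}_p$; stability of $\mathcal{K}_p$ under $e_i,f_i$ and the equality of characters then follow at once.

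By \eqref{eq:plwm} this preimage is $\{(b_1,b_2)\in\mathcal{Y}(0)^2:\mathfrak{w}(\Psi(b_1)*\Psi(b_2))\le w_p^+\}$, and by \eqref{wasso} together with Remark~\ref{prop:takeaway}, $\mathfrak{w}(\Psi(b_1)*\Psi(b_2))$ is the unique minimal element of $W_{\Lambda_0}\,I\bigl((w_{n(b_1)}^+)^{-1}\bigr)\,w_{m(b_2)}^+\,W_{\Lambda_0}$. So the whole problem reduces to computing this element as a function of $a:=n(b_1)$ and $c:=m(b_2)$. I would do this directly inside the infinite dihedral group $W=\langle s_0,s_1\rangle$: here $W_{\Lambda_0}=\{1,s_1\}$, every element is an alternating word in $s_0,s_1$, Bruhat order obeys $x\le y\iff\ell(x)<\ell(y)$ or $x=y$ (so $I\bigl((w_a^+)^{-1}\bigr)$ consists of $(w_a^+)^{-1}=s_0s_1s_0\cdots$ ($a$ letters) together with every element of length $<a$), and the basic cancellation fact is that a product of two alternating words whose adjacent letters agree collapses to the alternating word of length the difference of their lengths. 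A short case analysis on the parities of $a$ and $c$ then gives
\[\mathfrak{w}\bigl(\Psi(b_1)*\Psi(b_2)\bigr)=w_q^+,\qquad q=\begin{cases}0 & \text{if }c\le a,\\[2pt]\text{the largest odd integer }\le c-a & \text{if }c>a.\end{cases}\]

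To conclude, each $w_j^+$ is the minimal representative of its double coset $W_{\Lambda_0}w_j^+W_{\Lambda_0}$, and by \eqref{eq:bruhat} Bruhat order restricted to $\{w_j^+\}_{j\ge0}$ is the total order $w_i^+\le w_j^+\iff i\le j$; hence $\mathfrak{w}(\Psi(b_1)*\Psi(b_2))\le w_p^+\iff q\le p$. For $p=0$ this means $q=0$, i.e.\ $m(b_2)\le n(b_1)$, which is the defining condition of $\mathcal{K}_0$; for $p$ odd it means $c-a\le p+1$ (indeed $q=p$ already when $c-a=p+1$, whereas $q=p+2>p$ when $c-a=p+2$), i.e.\ $m(b_2)-n(b_1)\le p+1$, which is the defining condition of $\mathcal{K}_p$. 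This yields the required preimage identity and completes the proof.

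The one genuinely computational point is the determination of $\mathfrak{w}$ above: one must keep track of the cancellations in $\sigma\,w_c^+$ as $\sigma$ runs over the Bruhat ideal $I\bigl((w_a^+)^{-1}\bigr)$ and of the effect of multiplying by $W_{\Lambda_0}$ on both sides, and in particular isolate the parity phenomenon behind the ``$+1$''. A convenient way to organize it is to first describe $W_{\Lambda_0}\,I\bigl((w_a^+)^{-1}\bigr)$ explicitly---it is again a Bruhat down-set, enlarged by the two alternating words of lengths $a$ and $a+1$ that begin with $s_1$---then right-multiply by $w_c^+$ and minimize over the remaining $W_{\Lambda_0}$, noting that the minimum is achieved with $\sigma\in\{(w_a^+)^{-1},(w_{a-1}^+)^{-1},w_{a-1}^+\}$ according to the parities of $a$ and $c$.
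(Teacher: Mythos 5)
Your proposal is correct and follows essentially the same route as the paper: transport to the LS-path side via $\Psi\times\Psi$, invoke Theorem~\ref{pathmod}, and reduce everything to computing $\min W_{\Lambda_0}\,I\bigl((w_a^+)^{-1}\bigr)\,w_c^+\,W_{\Lambda_0}$, for which your formula (the largest odd integer $\le c-a$, or $0$ when $c\le a$) agrees exactly with the paper's Lemma~\ref{lem:min-lem-1}. The only difference is that the paper proves that lemma via the $z_j$-recursion for $\min I(x)y$ from \cite{krv}, whereas you sketch a direct computation with alternating words in the infinite dihedral group; both work, and your final parity analysis reproducing the condition $m(b_2)-n(b_1)\le p+1$ (and $m(b_2)\le n(b_1)$ for $p=0$) matches the paper's.
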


Let $p$ be $0$ or an even number, the element $w^+_p$ is the minimal representative of its double coset $W_{\Lambda_1} w^+_p W_{\Lambda_0}$. Let $\mathcal{K}_p'$ denote the set of all pairs $(b_1, b_2)$ in $\mathcal{Y}(1) \times \mathcal{Y}(0)$ such that $m(b_2) - n(b_1) \leq p+1$.

\begin{prop}\label{kkcrys2}
 The set $\mathcal{K}_p'$ is a crystal for $K(\Lambda_1,w_p^+,\Lambda_0)$. In other words, $\mathcal{K}_p'$ is invariant under the ${e}_i, {f}_i$ and its character equals that of $K(\Lambda_1,w^+_p,\Lambda_0)$.
\end{prop}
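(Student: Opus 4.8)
The plan is to mirror the proof of Proposition \ref{kkcrys1}, working through the crystal isomorphism $\Psi' : \mathcal{Y}(1) \to P_{\Lambda_1}$ of Remark~\ref{cryisorem} on the first factor and $\Psi : \mathcal{Y}(0) \to P_{\Lambda_0}$ on the second. Under $\Psi' \times \Psi$, a pair $(b_1, b_2) \in \mathcal{Y}(1) \times \mathcal{Y}(0)$ maps to a pair of LS paths $(\pi_1, \pi_2) \in P_{\Lambda_1} \times P_{\Lambda_0}$, and I will show that the compatibility condition $m(b_2) - n(b_1) \le p+1$ translates exactly into the condition $\mathfrak{w}(\pi_1 * \pi_2) \le w_p^+$ defining $P(\Lambda_1, w_p^+, \Lambda_0)$ in \eqref{eq:plwm}. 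Granting this, Theorem~\ref{pathmod} immediately gives that $P(\Lambda_1, w_p^+, \Lambda_0)$ is a path model for $K(\Lambda_1, w_p^+, \Lambda_0)$ — invariant under the root operators, with matching character — and since $\Psi' \times \Psi$ is a crystal isomorphism, $\mathcal{K}_p'$ inherits both properties. The character statement can alternatively be cross-checked against Proposition~\ref{kkd1}: counting pairs with weight a given $\Lambda_0 + \Lambda_1 - k\delta$ should reproduce the generating function \eqref{kgen2}.

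The key computation is the translation of $\mathfrak{w}(\pi_1 * \pi_2)$. By Remark~\ref{prop:takeaway}, the final direction of $\pi_1 = \Psi'(b_1)$ is $w_{n(b_1)}^-$ and the initial direction of $\pi_2 = \Psi(b_2)$ is $w_{m(b_2)}^+$. So with $\tau = w_{n(b_1)}^-$ and $\phi = w_{m(b_2)}^+$, I must identify the minimal element of $W_{\Lambda_1}\, I(\tau^{-1})\, \phi\, W_{\Lambda_0}$ from \eqref{wasso} and compare it, in the total Bruhat order \eqref{eq:bruhat} on $\dc{\Lambda_1}{\Lambda_0}$, with $w_p^+$. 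Using the explicit descriptions \eqref{stb0}, \eqref{stb1} of the coset spaces and the reduced-word structure of $w_n^\pm$, this reduces to an elementary length/cancellation count in $\widehat{\mathfrak{sl}_2}$: the point is that $\tau^{-1} = (w_{n(b_1)}^-)^{-1}$ has length $n(b_1)$, $\phi = w_{m(b_2)}^+$ has length $m(b_2)$, and the double coset of the minimal element of $W_{\Lambda_1} I(\tau^{-1})\phi W_{\Lambda_0}$ should be $w_q^+$ with $q = \max(0, m(b_2) - n(b_1) - 1)$, so that $\mathfrak{w}(\pi_1 * \pi_2) \le w_p^+$ iff $m(b_2) - n(b_1) - 1 \le p$, i.e. $m(b_2) - n(b_1) \le p+1$. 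I expect this to be essentially the same Bruhat-order bookkeeping already carried out for Proposition~\ref{kkcrys1}, with the parity of the indexing shifted because the left factor now lives in $W/W_{\Lambda_1}$ rather than $W/W_{\Lambda_0}$; in particular the ``$+1$'' in $p+1$ comes from the single overlap $s_0$ (or $s_1$) between a reduced word for $\tau^{-1}$ and a reduced word for $\phi$, exactly as in the $(\Lambda_0, \Lambda_0)$ case.

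The main obstacle is the precise determination of the minimal double coset representative of $W_{\Lambda_1}\, I(w_{n(b_1)}^-)^{-1}\, w_{m(b_2)}^+\, W_{\Lambda_0}$, and especially handling the boundary cases cleanly: when $m(b_2) \le n(b_1) + 1$ the associated Weyl group element collapses to $w_0^+ = 1$ (or $s_0$), and one must check these small cases do not break the uniform statement — this is why the definition of $\mathcal{K}_p'$ needs no separate clause for $p = 0$ (unlike $\mathcal{K}_0$), since $m(b_2) - n(b_1) \le 1$ already captures the $w = 1$ situation together with $w = s_0$, consistent with the remark $K(\Lambda_1, 1, \Lambda_0) \cong K(\Lambda_1, s_0, \Lambda_0)$ in the text. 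Once the double-coset identity is pinned down, everything else is formal: invariance under $e_i, f_i$ and the character identity are transported from Theorem~\ref{pathmod} through the crystal isomorphism $\Psi' \times \Psi$, and the containment relations $\mathcal{K}_p' \subseteq \mathcal{K}_q'$ for $p \le q$ follow from the nesting of the conditions $m(b_2) - n(b_1) \le p+1$.
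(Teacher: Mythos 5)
Your overall route is exactly the paper's: transport the problem through $\Psi'\times\Psi$, invoke Theorem~\ref{pathmod}, use Remark~\ref{prop:takeaway} to read off $\tau=w_{n(b_1)}^-$ and $\phi=w_{m(b_2)}^+$, and reduce everything to computing $\min W_{\Lambda_1}\,I(\tau^{-1})\phi\,W_{\Lambda_0}$ (the paper isolates this as Lemma~\ref{lem:min-lem-2} and computes it with the $z_j$-recursion of \eqref{eq:zseq}). However, the one computation you identify as the crux is stated incorrectly: the minimal double-coset element is \emph{not} $w_q^+$ with $q=\max(0,m-n-1)$ uniformly. Running the recursion \eqref{eq:zseq}, one finds $z_n=w^+_{\max(0,\,m-n+1)}$ when $m\equiv n\pmod 2$ and $z_n=w^+_{\max(0,\,m-n)}$ when $m\not\equiv n\pmod 2$; after passing to the minimal representative in $\dc{\Lambda_1}{\Lambda_0}$ (which drops an odd index by one), this gives $\ell=\max(0,m-n)$ in the equal-parity case and $\ell=\max(0,m-n-1)$ in the opposite-parity case. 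Your formula is only the second case. Concretely, for $m=4$, $n=2$ the correct answer is $w_2^+$, whereas $w_1^+=s_0$ is not even a minimal representative of its double coset in $\dc{\Lambda_1}{\Lambda_0}$. Note also that the parity split here is \emph{swapped} relative to the $(\Lambda_0,\Lambda_0)$ case of Lemma~\ref{lem:min-lem-1}, so "exactly as in the $(\Lambda_0,\Lambda_0)$ case" is not quite right either.

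This error happens to be harmless for the final statement: since $p$ is even and $\ell$ is even (or zero) in both parity cases, $\ell\le p$ is equivalent to $m-n\le p+1$ either way, which is the condition defining $\mathcal{K}_p'$. So your conclusion, your treatment of the boundary cases $m\le n+1$, and your observation about why no separate clause is needed at $p=0$ (unlike for $\mathcal{K}_0$) are all correct. To make the proof sound, replace the uniform formula for $q$ by the two-case formula above and then observe that both cases collapse to the single inequality $m(b_2)-n(b_1)\le p+1$ by the parity argument.
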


Proofs of the above propositions \ref{kkcrys1} and \ref{kkcrys2} will be deduced by using theorem \ref{pathmod} and the crystal isomorphisms $\Psi \times \Psi$ and $\Psi' \times \Psi$. In particular, we only need to determine the set $P(\lambda, w, \mu)$ (equation~\eqref{eq:plwm}) for the two cases above, i.e., for $(\lambda, w, \mu) = (\Lambda_0,w^+_p,\Lambda_0)$ and $(\lambda, w, \mu) = (\Lambda_1,w^+_p,\Lambda_0)$. Towards this end, we state the following lemmas:
\begin{lemma}\label{lem:min-lem-1}
  Let $\phi=w_m^+$ and $\tau = w_n^+$ with $m, n \geq 0$. Then \[\min\, W_{\Lambda_0} \, I(\tau^{-1})\phi \,W_{\Lambda_0} = w_{\ell}^+\] where
  \[ \ell = \begin{cases} \max(0,m-n-1) & \text{ if } m \equiv n \pmod{2} \\
    \max(0, m-n)  & \text{ if } m \not\equiv n \pmod{2}
    \end{cases}
      \]
\end{lemma}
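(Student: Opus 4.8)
The plan is to work entirely inside the totally ordered coset space $W/W_{\Lambda_0}$, using the explicit description of its elements as $w_k^+ = s_{i_k}\cdots s_{i_1}$ from~\eqref{stb0} together with the total order~\eqref{eq:bruhat}. First I would reduce the computation of $\min W_{\Lambda_0} I(\tau^{-1})\phi W_{\Lambda_0}$ to two more tractable pieces. Since the Bruhat ideal $I(\tau^{-1})=\{\sigma : \sigma \le \tau^{-1}\}$ is a union of elements, and since $\min W_{\Lambda_0} \sigma \phi W_{\Lambda_0}$ is monotone in $\sigma$ with respect to Bruhat order, the minimum over $\sigma \in I(\tau^{-1})$ is achieved by taking $\sigma$ as small as possible in each relevant ``direction''; so the key is to understand, for a single $\sigma$, the minimal length representative of $W_{\Lambda_0}\sigma\phi W_{\Lambda_0}$, and then to see which $\sigma \le \tau^{-1}$ makes this smallest. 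Because $\tau = w_n^+$, the element $\tau^{-1}$ has length $n$, and $I(\tau^{-1})$ consists of all $\sigma$ with $\ell(\sigma)\le n$ that are $\le \tau^{-1}$ in Bruhat order.

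The main technical step is a length/parity bookkeeping computation: I would multiply out $\sigma\phi$ for $\sigma \le \tau^{-1}$ and cancel against $W_{\Lambda_0}$ on both sides. Here the crucial structural fact is that $\phi = w_m^+ = s_{i_m}\cdots s_{i_1}$ is itself already a minimal coset representative in $W/W_{\Lambda_0}$ (its rightmost letter is $s_1$), while $W_{\Lambda_0}$ is the parabolic generated by $s_0$ (or $s_1$, depending on parity of... — more precisely, $W_{\Lambda_0}$ is the infinite dihedral-type stabilizer, but for $\sltwohat$ it is generated by a single reflection). Reducing $W_{\Lambda_0}\sigma W_{\Lambda_0}\phi W_{\Lambda_0}$ then amounts to counting how many of the $n$ letters of $\sigma^{-1}\le\tau$ can be absorbed into the left $W_{\Lambda_0}$ and how the residual interacts with the leftmost letters of $\phi$: each cancellation on the left shortens the word by one, and when $m\equiv n\pmod 2$ an extra letter cancels (accounting for the $m-n-1$ versus $m-n$ dichotomy), whereas when $m\not\equiv n$ no such extra cancellation occurs. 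The $\max(0,\cdot)$ truncation reflects the fact that once $\sigma\phi \in W_{\Lambda_0}$ entirely, the double coset is $W_{\Lambda_0}$ itself with minimal element $w_0^+ = 1$.

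I expect the main obstacle to be handling the two-sided nature of the double coset $W_{\Lambda_0}\backslash W/W_{\Lambda_0}$ cleanly — in particular, verifying that the naive ``cancel as much as possible from the left'' heuristic genuinely produces the minimal element, rather than merely a short one. To make this rigorous I would either invoke the standard theory of minimal double-coset representatives (for each double coset there is a unique minimal element, and it is characterized by being both left- and right-minimal), or argue directly using the total order~\eqref{eq:bruhat}: since $W/W_{\Lambda_0}$ is totally ordered, I can compute $\min W_{\Lambda_0}\sigma\phi$ explicitly as some $w_k^+$, then check which residues $k$ are hit as $\sigma$ ranges over $I(\tau^{-1})$, and finally observe that right-multiplication by $W_{\Lambda_0}$ does not decrease this further because $w_k^+$ is already right-minimal. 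The parity case split should then fall out by tracking the alternating indices $i_j \equiv j+1 \pmod 2$ through the multiplication, and the formula $\ell = \max(0, m-n-[m\equiv n])$ is what survives.
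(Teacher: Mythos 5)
Your overall strategy (explicit computation in the infinite dihedral group, tracking parities) is viable, but the step you lean on to organize the minimization is false, and the step you yourself flag as ``the main obstacle'' is precisely the content that has to be supplied. The map $\sigma \mapsto \min W_{\Lambda_0}\sigma\phi W_{\Lambda_0}$ is not monotone in Bruhat order: taking $\sigma$ ``as small as possible'' gives $\sigma=1$ and the answer $\min W_{\Lambda_0}\phi W_{\Lambda_0}$, which does not depend on $n$ at all, while taking $\sigma$ as large as possible, $\sigma=\tau^{-1}$, also fails when $m\not\equiv n\pmod 2$, since then the rightmost letter of $\tau^{-1}$ differs from the leftmost letter of $\phi$, no cancellation occurs, and $\tau^{-1}\phi$ has length $m+n$. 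The true minimizer is the inverse of the longest prefix of $\phi$ whose inverse lies in $I(\tau^{-1})$ --- of length $n$ when $m\equiv n$, but only $n-1$ when $m\not\equiv n$ --- and to conclude you must rule out every other $\sigma\le\tau^{-1}$ landing in a smaller double coset. Neither of your proposed remedies does this as stated: uniqueness of minimal double-coset representatives says nothing about minimizing over a Bruhat ideal, and ``check which residues $k$ are hit'' is exactly the unproved claim.

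The paper closes this gap by quoting \cite[Cor.~2.5, 2.6]{krv}: for a reduced word $\tau^{-1}=s_{i_n}\cdots s_{i_1}$ the recursion $z_1=\phi\wedge s_{i_1}\phi$, $z_j=z_{j-1}\wedge s_{i_j}z_{j-1}$ produces $\min I(\tau^{-1})\phi$ as the single element $z_n$, and moreover $\min W_{\Lambda_0} I(\tau^{-1})\phi\, W_{\Lambda_0}=\min W_{\Lambda_0} z_n W_{\Lambda_0}$. Granting that, the rest is a short parity check: $z_n=w^+_{\max(0,m-n)}$ or $w^+_{\max(0,m-n+1)}$ according as $m\equiv n$ or not, followed by absorbing one letter into the left copy of $W_{\Lambda_0}$ whenever the index is even and positive. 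Two smaller corrections to your bookkeeping: by \eqref{stb0} the rightmost letter of $w^+_m$ is $s_0$, not $s_1$ (and $W_{\Lambda_0}=\{1,s_1\}$); and the $m-n-1$ versus $m-n$ dichotomy is not caused by an ``extra'' absorption into $W_{\Lambda_0}$ in the $m\equiv n$ case --- exactly one letter is absorbed on the left in both cases --- but by whether all $n$ letters of $\tau^{-1}$, or only $n-1$ of them, cancel into $\phi$.
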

\begin{lemma}\label{lem:min-lem-2}
  Let $\phi=w_m^+$ and $\tau = w_n^-$ with $m, n \geq 0$. Then \[\min\,W_{\Lambda_1} \,I(\tau^{-1})\phi \,W_{\Lambda_0} = w_{\ell}^+\] where
  \[ \ell = \begin{cases} \max(0,m-n) & \text{ if } m \equiv n \pmod{2} \\
    \max(0, m-n-1)  & \text{ if } m \not\equiv n \pmod{2}
    \end{cases}
      \]
\end{lemma}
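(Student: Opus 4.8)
\textbf{Proof plan for Lemma~\ref{lem:min-lem-2}.}

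The plan is to proceed almost exactly as for Lemma~\ref{lem:min-lem-1}, keeping careful track of the parity shift caused by using the stabilizer $W_{\Lambda_1}$ on the left instead of $W_{\Lambda_0}$. First I would record the combinatorial data I need: by \eqref{stb0} and \eqref{stb1}, the minimal coset representatives $w_k^+$ and $w_k^-$ are explicit products of alternating simple reflections, and \eqref{eq:bruhat} says Bruhat order on each of $W/W_{\Lambda_0}$ and $W/W_{\Lambda_1}$ is the total order given by the index $k$. I also need the analogous statement for the \emph{right} cosets $W_{\Lambda_j}\backslash W$, namely that $(w_k^\epsilon)^{-1}$ is the minimal representative of $W_{\Lambda_j} (w_k^\epsilon)^{-1}$ where $\epsilon$ and $j$ are matched appropriately, and that inversion reverses Bruhat order, so $I(\tau^{-1}) = \{\sigma : \sigma \le (w_n^-)^{-1}\}$ consists exactly of the elements whose minimal double-coset-free form has length $\le n$. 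Concretely, $\tau^{-1} = (w_n^-)^{-1} = s_{i_1} s_{i_2}\cdots s_{i_n}$ with $i_j \equiv j \pmod 2$, so it \emph{ends} in $s_1$ (when $n$ is odd) or $s_0$ (when $n$ even) — this is the source of the parity bookkeeping that differs from Lemma~\ref{lem:min-lem-1}.

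Next I would reduce the double coset $W_{\Lambda_1} I(\tau^{-1}) \phi\, W_{\Lambda_0}$ to a single minimal element. Since $I(\tau^{-1})$ is a Bruhat order ideal, $W_{\Lambda_1} I(\tau^{-1})\phi W_{\Lambda_0} = \bigcup_{\sigma \le \tau^{-1}} W_{\Lambda_1}\sigma\phi W_{\Lambda_0}$, and by property~(1) in the introduction (the containment \eqref{eq:bruhatcont} / monotonicity of double cosets under Bruhat order) the minimum over this union is attained at $\sigma = \tau^{-1}$ itself; so it suffices to compute $\min W_{\Lambda_1}\,\tau^{-1}\phi\,W_{\Lambda_0} = \min W_{\Lambda_1}\, (w_n^-)^{-1} w_m^+\, W_{\Lambda_0}$. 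I would then compute the product $(w_n^-)^{-1} w_m^+ = (s_{i_1}\cdots s_{i_n})(s_{j_m}\cdots s_{j_1})$ where the two alternating strings are spliced at the junction $s_{i_n} s_{j_m}$; depending on whether $i_n = j_m$ or not, either a cancellation $s_{i_n}s_{j_m} = 1$ occurs (when the parities agree, i.e.\ when $n \not\equiv m$... here I must be careful since one string is a ``$-$''-type and the other a ``$+$''-type, which already offsets the parities by one), propagating inward, or the strings concatenate into one long alternating word. Then I would left-multiply by the appropriate elements of $W_{\Lambda_1}$ and right-multiply by elements of $W_{\Lambda_0}$ to absorb the extremal reflections and bring the result to the canonical form $w_\ell^+$, reading off $\ell$ as the number of surviving reflections. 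Tracking how many reflections cancel at the junction versus how many get absorbed into the two stabilizers yields the two cases: $\ell = \max(0, m-n)$ when $m\equiv n\pmod 2$ and $\ell = \max(0, m-n-1)$ when $m \not\equiv n \pmod 2$ — note these are exactly the two cases of Lemma~\ref{lem:min-lem-1} with the roles of the parity classes swapped, which is the expected effect of replacing $W_{\Lambda_0}$ by $W_{\Lambda_1}$ on the left.

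The main obstacle I anticipate is the parity bookkeeping in the junction computation: one must correctly identify, for each residue of $m$ and $n$ mod $2$, which simple reflection terminates each alternating word, whether the junction reflections coincide (forcing cancellation) or differ (forcing concatenation), and then how far the cancellation cascades before it is stopped by a mismatch — and separately, how many leftover reflections at the two ends can be swallowed by $W_{\Lambda_1}$ (generated by $s_0$) on the left and $W_{\Lambda_0}$ (generated by $s_1$) on the right. A clean way to organize this, which I would adopt, is to instead argue via the LS-path / geometric interpretation: $\min W_{\Lambda_1} v\, W_{\Lambda_0}$ for $v$ ranging over a Bruhat ideal can be read off from pairing the relevant extremal weights against simple coroots using \eqref{aglev}, or equivalently from the action on the coset spaces \eqref{stb0}–\eqref{stb1}; this sidesteps explicit word manipulation and handles all parity cases uniformly. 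Either way the verification is a finite, elementary $\afsl$-Weyl-group computation, and once Lemma~\ref{lem:min-lem-1} is in hand the present lemma follows by the same method with the single modification that the left stabilizer is now $W_{\Lambda_1}$, accounting for the shift between the two displayed case splits.
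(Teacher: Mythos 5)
There is a genuine gap at the very first step of your reduction. You claim that, since $I(\tau^{-1})$ is a Bruhat ideal, the minimum of $W_{\Lambda_1}\,I(\tau^{-1})\phi\,W_{\Lambda_0}=\bigcup_{\sigma\le\tau^{-1}}W_{\Lambda_1}\sigma\phi\,W_{\Lambda_0}$ is attained at $\sigma=\tau^{-1}$, so that it suffices to compute $\min W_{\Lambda_1}\tau^{-1}\phi\,W_{\Lambda_0}$. This is false, and property (1) of the introduction (the containment \eqref{eq:bruhatcont}) does not imply it: that property concerns monotonicity of the modules $K(\lambda,w,\mu)$ in $w$, not the location of the minimal element of a union of double cosets. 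Concretely, take $m=4$, $n=2$, so $\phi=w_4^+=s_1s_0s_1s_0$ and $\tau^{-1}=(w_2^-)^{-1}=s_1s_0$. Then $\tau^{-1}\phi=s_1s_0s_1s_0s_1s_0=w_6^+$ is already reduced, and since it begins with $s_1\notin W_{\Lambda_1}$ and ends with $s_0\notin W_{\Lambda_0}$, your recipe returns $w_6^+$, i.e.\ $\ell=6$, whereas the lemma asserts $\ell=\max(0,4-2)=2$. The minimum of the ideal is achieved at an interior element: e.g.\ $s_1\in I(\tau^{-1})$ gives $s_1\phi=s_0s_1s_0=w_3^+$, whose double-coset minimum is $w_2^+$. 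Your junction-cancellation analysis of $(w_n^-)^{-1}w_m^+$ happens to reproduce the stated $\ell$ exactly in the parity case where the two alternating words cancel against each other (here $m\not\equiv n\pmod 2$); in the case $m\equiv n\pmod 2$ the junction letters differ, the product has length $m+n$, and the answer is off by roughly $2n$.

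The paper avoids this by using the algorithm of \cite[Cor.\ 2.5, 2.6]{krv}, recalled in \eqref{eq:zseq}: writing $\tau^{-1}=s_{i_k}\cdots s_{i_1}$ reduced, one forms $z_1=\phi\wedge s_{i_1}\phi$ and $z_j=z_{j-1}\wedge s_{i_j}z_{j-1}$, and then $\min W_{\Lambda_1}\,I(\tau^{-1})\phi\,W_{\Lambda_0}=\min W_{\Lambda_1}z_kW_{\Lambda_0}$. Each step takes the Bruhat-smaller of $z_{j-1}$ and $s_{i_j}z_{j-1}$, which is what strips letters off $\phi$ one at a time and lands on $w_d^+$ with $d$ of order $m-n$ rather than $m+n$; the final absorption of a leading $s_0$ by $W_{\Lambda_1}$ when $d$ is odd then produces the two displayed cases. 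If you replace your first reduction by this $\wedge$-recursion (keeping your parity bookkeeping for which simple reflection sits at each end of the alternating words), the rest of your plan goes through and coincides with the paper's argument.
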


\subsection{} To prove these lemmas, we recall from \cite{krv} the following procedure to compute
$\min I(x) y$ for $x, y \in W$. Given $w \in W$ and a simple reflection $s$, let $w \wedge sw$ denote the minimum of $\{w, sw\}$ in Bruhat order. Let $x = s_{i_k} s_{i_{k-1}} \cdots s_{i_1}$ be a reduced expression, and define inductively:
\begin{equation}\label{eq:zseq}
  z_1 = y \wedge s_{i_1} y \text{ and } z_j = z_{j-1} \wedge s_{i_j} z_{i_{j-1}} \text{ for } 2 \leq j \leq k
\end{equation}
Then $\min I(x) y = z_k$. Further, $\min W_\lambda I(x) y W_\mu = \min W_\lambda z_k W_\mu$ for any dominant integral weights $\lambda, \mu$ \cite[Cor 2.5, 2.6]{krv}.

\subsection{} For Lemma~\ref{lem:min-lem-1}, we have $\phi = s_{m-1}  \cdots s_1 s_0$ and $\tau = s_{n-1}  \cdots s_1 s_0$ where we define $s_j:=s_0$ if $j$ is even and $s_1$ if $j$ is odd. Let $x = \tau^{-1}$ and $y =\phi$.

If $m$ and $n$ are of the same parity, then $s_{m-1} = s_{n-1}$  and we have $z_1 = s_{m-1} \phi = w_{m-1}^+$ in \eqref{eq:zseq}. Likewise, $z_2 = s_{m-2} s_{m-1} \phi = w_{m-2}^+$ and so on, until $z_n = w_{d}^+$ where $d=0$ if $m \leq n$ and $m-n$ otherwise. In the latter case, $d$ is even and we obtain $\min W_{\Lambda_0} w_d^+ W_{\Lambda_0} = \min W_{\Lambda_0} w_{d-1}^+ W_{\Lambda_0}$.  This establishes the first case of Lemma~\ref{lem:min-lem-1}.

Next, if $m$ and $n$ are of opposite parity, then $s_{m-1} \neq s_{n-1}$. In this case $z_1 = \phi = w_{m}^+$ in \eqref{eq:zseq}. But now $s_{m-1} = s_{n-2}$ and so $z_2 = s_{m-1} \phi = w_{m-1}^+$ and so on, until $z_n = w_{d}^+$ where $d=0$ if $m \leq n-1$ and $m-n+1$ otherwise. Again in the latter case, $d$ is even and we obtain $\min W_{\Lambda_0} w_d^+ W_{\Lambda_0} = \min W_{\Lambda_0} w_{d-1}^+ W_{\Lambda_0}$.  This establishes the second case of Lemma~\ref{lem:min-lem-1}. 

The proof of Lemma~\ref{lem:min-lem-2} is analogous, taking into account the fact that
\[ \min W_{\Lambda_1} w_d^+ W_{\Lambda_0} = \min W_{\Lambda_1} w_{d-1}^+ W_{\Lambda_0} \]
when $d$ is odd. \qed

\subsection{} We prove Proposition~\ref{kkcrys1} first. 
Given $(b_1,b_2) \in \mathcal{Y}(0)^2$, let $m(b_i), n(b_i)$ denote the dimensions of the bounding rectangles, for $i=1, 2$. Let $\phi = w_m^+$ and $\tau = w_n^+$ where $m = m(b_2)$ and $n=n(b_1)$. 
We recall the crystal isomorphism $\Psi$ from Theorem~\ref{cryiso}. It is now clear from Remark~\ref{prop:takeaway} and Equation \eqref{eq:bruhat} that $(\Psi(b_1), \Psi(b_2)) \in P(\Lambda_0,w^+_p,\Lambda_0)$ if and only if $\ell \leq p$ where $\ell$ is given by Lemma~\ref{lem:min-lem-1}.

First consider the case when $m \not\equiv n \pmod{2}$; then $\ell = \max(0, m-n)$ is either $0$ or odd. Since $p$ is also odd, we conclude that $\ell \leq p$ if and only if $\ell \leq p+1$. Thus $m -n \leq p+1$. When 
$m \equiv n \pmod{2}$, we directly obtain $m-n \leq p+1$. The converse follows likewise. Thus we conclude that $\ell \leq p$ if and only if $m -n \leq p+1$, establishing Proposition~\ref{kkcrys1}.

The proof of Proposition~\ref{kkcrys2} proceeds along similar lines, using Lemma~\ref{lem:min-lem-2} instead of Lemma~\ref{lem:min-lem-1}. The details are left to the interested reader. \qed

\section{Appendix}
\subsection{}
We give a proof of Theorem~\ref{cryiso}. We will recall the action of the root operators $e_i$ and $f_i$ corresponding to the simple roots $\alpha_i$ of a symmetrizable Kac-Moody algebra $\lie g$ on LS paths. We recall that an LS path $\pi$ of shape $\lambda$ is given by a sequence:
\begin{equation}\label{eq:lspath}
    \pi = (\sigma_1 > \sigma_2 > \cdots > \sigma_r \ ; \ 0=a_0 < a_1 <a_2 < \cdots < a_{r-1} < a_r =1).
\end{equation}
where $\sigma_j \in W/W_\lambda$ and $a_j$ are rational numbers for $j=1,2,\ldots,r$. The data defining $\pi$ may be viewed as a piecewise linear map from $[0,1]$ to $\lie h^*$ (the dual of the Cartan subalgebra) as follows:
\begin{equation}\label{eq:lspathdef}
    \pi(t) = \sum_{k=1}^{j-1} (a_k - a_{k-1})\sigma_k(\lambda) + (t - a_{j-1})\sigma_j(\lambda), \ \text{for } \ t \in [a_{j-1},a_j].    
\end{equation}
 Fix a simple root $\alpha_i$ of $\lie g$ and define a function $h(t)$ in the interval $[0,1]$ as follows:
\begin{equation}\label{eq:hfun}
    h(t) = \langle \pi(t), \alpha_i^\vee \rangle
\end{equation}
where $\alpha_i^\vee$ is the simple coroot corresponding to $\alpha_i$.

We will define the action of $f_i$ on $\pi$; the action of $e_i$ is similar and we refer to  \cite{litt:inv} for more details. We remark that we use the definition from \cite{litt:inv} rather than the improved version from \cite{litt:ann}, but these coincide on LS paths. Let $Q$ be the minimum integer attained by the function $h$ and $p$ be the largest index such that $h(a_p) = Q$. Let $P = h(1)-Q$; if $P\geq 1$ choose the minimum $x$ such that $x\geq q$ and $h(t)\geq Q+1$ for $t \geq a_x$.

The LS path $f_i\pi$ may be obtained as follows \cite[Proposition 4.2]{litt:inv}:
\begin{enumerate}
    \item If $h(a_x) = Q+1$ and $s_{\alpha_i}\sigma_{p+1}=\sigma_p$ then, 
    $f_i\pi$ is given by the tuple: 
    \begin{align*} (&\sigma_1>\cdots>\sigma_{p-1}>s_{\alpha_i}\sigma_{p+1} > \cdots > s_{\alpha_i}\sigma_x > \sigma_{x+1} > \cdots > \sigma_r ;\\ &a_0 < a_1 < \cdots <a_{p-1}<a_{p+1}<\cdots<a_r).
    \end{align*}
    \item If $h(a_x) = Q+1$ and $s_{\alpha_i}\sigma_{p+1}<\sigma_p$ then, 
    $f_i\pi$ equals 
    \begin{align*}
    (&\sigma_1>\cdots>\sigma_{p}>s_{\alpha_i}\sigma_{p+1} > \cdots > s_{\alpha_i}\sigma_x > \sigma_{x+1} > \cdots > \sigma_r ;\\ &a_0 < a_1 <\cdots\cdots<a_r).
    \end{align*}
    \item If $h(a_x) > Q+1$ and $s_{\alpha_i}\sigma_{p+1}=\sigma_p$ then $f_i\pi$ equals
    \begin{align*} (&\sigma_1>\cdots>\sigma_{p-1}>s_{\alpha_i}\sigma_{p+1} > \cdots > s_{\alpha_i}\sigma_x > \sigma_{x} > \cdots > \sigma_r ; \\&a_0 < a_1 < \cdots <a_{p-1}<a_{p+1}<\cdots < a_{x-1}<a<a_x<\cdots<a_r)\end{align*} where $h(a)=Q+1.$ 
    \item If $h(a_x) > Q+1$ and $s_{\alpha_i}\sigma_{p+1}<\sigma_p$ then, 
    $f_i\pi$ equals \begin{align*}(&\sigma_1>\cdots>\sigma_{p}>s_{\alpha_i}\sigma_{p+1} > \cdots > s_{\alpha_i}\sigma_x > \sigma_{x} > \cdots > \sigma_r ; \\&a_0 < a_1 < \cdots <a_{x-1}<a<a_{x}<\cdots<a_r)\end{align*} where $h(a)=Q+1$.
\end{enumerate}

We need the following lemma for the proof of \ref{cryiso}.
\begin{lem}\label{strglem}
Let $b = (\mu_1,\mu_2,\ldots,\mu_l) \in \mathcal{Y}(0)$. Let $m=\mu_1$,  $n=l$ and  $\widetilde{b}' = (i_{n+1},i_{n+2},\ldots,i_m)$ be the associated partition defined in Section~ \ref{s:cryiso}. The $0$-signature and $1$-signature of $b$ are given as follows:

\begin{enumerate}
\item\label{it1} If $m$ and $n$ are both even (resp. odd) then the $0$-signature (resp. $1$-signature) of $b$ is given as follows:
\begin{equation}\label{strlm1}
\underbrace{++\cdots+}_{(n+1-i_{n+1})} \underbrace{--\cdots-}_{(i_{n+1}-i_{n+2})} \cdots\cdots \underbrace{++\cdots+}_{(i_{m-2}-i_{m-1})} \underbrace{--\cdots-}_{(i_{m-1}-i_m)}\underbrace{++\cdots+}_{(i_m)} 
\end{equation}

\item\label{it2} If $m$ is even and $n$ is odd (resp. $m$ is odd and $n$ is even) then the $0$-signature (resp. $1$-signature) of $b$ is given as follows:
\begin{equation}\label{strlm2}
\underbrace{--\cdots-}_{(n-i_{n+1})} \underbrace{++\cdots+}_{(i_{n+1}-i_{n+2})} \cdots\cdots \underbrace{++\cdots+}_{(i_{m-2}-i_{m-1})} \underbrace{--\cdots-}_{(i_{m-1}-i_m)}\underbrace{++\cdots+}_{(i_m)} 
\end{equation}

\item\label{it3} If $m$ is odd and $n$ is even (resp. $m$ is even and $n$ is odd) then the $0$-signature (resp. $1$-signature) of $b$ is given as follows:
\begin{equation}\label{strlm3}
\underbrace{++\cdots+}_{(n+1-i_{n+1})} \underbrace{--\cdots-}_{(i_{n+1}-i_{n+2})}\cdots\cdots\underbrace{--\cdots-}_{(i_{m-2}-i_{m-1})} \underbrace{++\cdots+}_{(i_{m-1}-i_m)}\underbrace{--\cdots-}_{(i_m)} 
\end{equation}

\item\label{it4} If $m$ and $n$ are both odd (resp. even) then the $0$-signature (resp. $1$-signature) of $b$ is given as follows:
\begin{equation}\label{strlm4}
\underbrace{--\cdots-}_{(n-i_{n+1})} \underbrace{++\cdots+}_{(i_{n+1}-i_{n+2})}\cdots\cdots\underbrace{--\cdots-}_{(i_{m-2}-i_{m-1})} \underbrace{++\cdots+}_{(i_{m-1}-i_m)}\underbrace{--\cdots-}_{(i_m)}  
\end{equation}
\end{enumerate}
\end{lem}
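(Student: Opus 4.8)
The plan is to translate the $0$- and $1$-signatures of $b$ into the language of $\widetilde b'=(i_{n+1},\dots,i_m)$ by explicitly locating the addable and removable boxes of $b$ and reading off their labels. First I would use $2$-regularity: since $\mu_1>\mu_2>\dots>\mu_n\ge1$, every row of $b$ ends in a removable box and can be lengthened by one box, so the removable boxes are exactly $r_k:=(n+1-k,\ \mu_{n+1-k})$ for $k=1,\dots,n$, while the addable boxes are exactly $d_{k+1}:=(n+1-k,\ \mu_{n+1-k}+1)$ for $k=1,\dots,n$ together with $d_1:=(n+1,\ 1)$. Ordering these $2n+1$ boxes by column index gives
\[
1 \le \mu_n < \mu_n+1 \le \mu_{n-1} < \mu_{n-1}+1 \le \cdots \le \mu_1 < \mu_1+1 = m+1,
\]
so that the columns are scanned in the order $d_1,r_1,d_2,r_2,\dots,r_n,d_{n+1}$ (a weak inequality, meaning two of these boxes share a column, occurs exactly when $\mu_n=1$ or when two consecutive parts of $b$ differ by $1$); no column outside this list contributes to either signature.

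Next I would compute labels modulo $2$, using that the box in row $r$, column $c$ of a charge-$0$ diagram has label $\equiv c-r$. One finds: $d_1$ has label $\equiv n$; $r_k$ has label $\equiv\mu_{n+1-k}-(n+1-k)$; and $d_{k+1}$ has label $\equiv(\text{label of }r_k)+1$. Hence $r_k$ and $d_{k+1}$ always carry opposite labels, so each pair $(r_k,d_{k+1})$ contributes exactly one sign to each signature: a $-$ to the signature whose colour is the label of $r_k$, and a $+$ to the other. Putting $\nu_j:=\mu_j-(n+1-j)=\widetilde b_j\ge0$ (so $\nu_1\ge\cdots\ge\nu_n\ge0$, $\nu_1=m-n$) and using $\mu_{n+1-k}-(n+1-k)\equiv\nu_{n+1-k}+n+1\pmod 2$, the $0$-signature becomes a leading $+$ --- present exactly when $n$ is even, contributed by $d_1$ --- followed by the length-$n$ string whose $k$-th entry is $-$ precisely when $\nu_{n+1-k}\equiv n+1\pmod 2$, and the $1$-signature is the same with $n+1$ replaced by $n$ and the leading $+$ present exactly when $n$ is odd. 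As $k$ runs over $1,\dots,n$ this scans the parts $\nu_n\le\nu_{n-1}\le\cdots\le\nu_1$ in weakly increasing order, and since $(i_{n+1},\dots,i_m)$ is the conjugate of $\widetilde b=(\nu_1,\dots,\nu_n)$, the number of parts equal to a fixed $v\in\{0,1,\dots,m-n\}$ is $i_{n+v}-i_{n+v+1}$, with the conventions $i_n:=n$, $i_{m+1}:=0$. So the string in question equals $\prod_{v=0}^{m-n}(\epsilon_v)^{i_{n+v}-i_{n+v+1}}$ with $\epsilon_v\in\{+,-\}$ alternating in $v$; and since the leading $+$ is present exactly when $\epsilon_0=+$, absorbing it into the $v=0$ block $+^{n-i_{n+1}}$ makes the start $+^{n+1-i_{n+1}}$, while otherwise the signature starts with $-^{n-i_{n+1}}$.

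It then remains to read off which of \eqref{strlm1}--\eqref{strlm4} occurs in each parity case. When $m\equiv n\pmod 2$ the last block $v=m-n$ is even, so $\epsilon_{m-n}=+$ for the colour congruent to $n$ (which also receives the leading $+$), giving \eqref{strlm1}, while the other colour gives \eqref{strlm4}; when $m\not\equiv n$ the block $v=m-n$ is odd, giving \eqref{strlm3} for the colour congruent to $n$ and \eqref{strlm2} for the other. Matching ``the colour congruent to $n$'' with $\{0,1\}$ according to the parity of $n$ reproduces precisely the four assignments of the lemma. I expect the main work to be bookkeeping rather than insight: pinning down the interleaving of addable and removable columns together with its coincidence cases, and then carrying the label computation through the four parity cases without a sign slip. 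The extremal case $m=n$, where $b$ is the staircase $(n,n-1,\dots,1)$, $\widetilde b'$ is empty, and each signature is monochromatic, should be verified directly; it agrees with the formulas under the conventions $i_n=n$, $i_{m+1}=0$.
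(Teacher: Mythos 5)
Your proposal is correct and follows essentially the same route as the paper's proof: both enumerate the addable and removable columns row by row, observe that rows whose parts decrease by exactly $1$ produce runs of identical signs, and count the run lengths as the differences $i_{n+v}-i_{n+v+1}$ of the conjugate partition $\widetilde b'$. The only difference is presentational: the paper argues from the rightmost column leftwards, treats one parity case in detail and declares the others similar, whereas you handle all four cases uniformly via the explicit label computation $\mu_j-j\equiv\nu_j+n+1\pmod 2$ and carefully flag the column-coincidence cases, which makes your write-up somewhat more complete.
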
 
 
\begin{proof}
We will give a proof only of part $(1)$ for the $0$-\emph{signature} of $b$. A similar proof works for the $1$-\emph{signature} and for the cases $(2)$, $(3)$ and $(4)$. Observe that $i_{n+j}$  is less than or equal to the length of the $j^{th}$ column of  $b$ for $j=1,2,\ldots,m-n$. If $m$ is even then the last coloured box in the first row is $1$-\emph{coloured}. Then a $0$-\emph{coloured} box is addable in $(m+1)^{th}$-column of $b$, thus we have a $``+"$ at the rightmost of the $0$-signature of $b$. Note that we delete $(n-k+1)$ from the $k^{th}$-row which implies $\mu_{t+1} = \mu_t-1 $ for $t=1,2,\ldots,i_{m}-1$ and hence $``+"$ occurs $i_m$ times by Remark~ \ref{r:regular}. Next we have $\mu_{i_m+t+1} = \mu_{i_m+t}-1$ for $t=0,1,\ldots,(i_{m-1}-i_m-1)$; then $``-"$ occurs $(i_{m-1}-i_m)$ times by Remark~\ref{r:regular}. Continuing this way, we get strings corresponding to all cases depending on the parities of $m$ and $n$.
\end{proof}

\subsection{} Now we will prove Theorem~\ref{cryiso}

\begin{proof}
To prove the map $\Psi$ is a crystal isomorphism we have to show the following:
\begin{enumerate}
\item for $b \in \mathcal{Y}(0)$ if ${f_i} b \not=0$ then $\Psi({f_i} b) = f_i \Psi(b)$ for $i=0,1$.

\item for $b \in \mathcal{Y}(0)$ if ${e_i} b \not=0$ then $\Psi({e_i} b) = e_i \Psi(b)$ for $i=0,1$.
\end{enumerate}

It suffices to prove $(1)$, since $(2)$ follows from $(1)$ from the fact that  ${f_i}$ and ${e_i}$ are inverses of each other. We will prove $(1)$ only for ${f_0}$ and the proof for ${f_1}$ is similar.

Let $b = (\mu_1,\mu_2,\ldots,\mu_l) \in \mathcal{Y}(0)$  be a charged partition. Let $m=\mu_1,n=l$ and $\widetilde{b}' = (i_{n+1},i_{n+2},\ldots,i_m)$ be the associated partition defined in \ref{s:cryiso}. Then:
 \begin{equation}\label{imgphib}
\Psi(b) = \left\lbrace \begin{array}{c}
\bar{w}:= w_{m} > w_{m-1} > \cdots >w_{n+1} > w_{n} \\
\bar{a} : = 0  < \frac{i_m}{m} < \frac{i_{m-1}}{m-1}<\cdots< \frac{i_{n+1}}{n+1}<1. \end{array} \right.
 \end{equation}
 
 To find the action of $f_0$ on the LS path $\Psi(b)$ we have to determine the rightmost minimum of the piecewise linear function $h(t):= \langle \Psi(b)(t),\alpha_0^\vee \rangle$. Note that $h$ is linear in each interval $[\frac{i_{j+1}}{j+1},\frac{i_j}{j}]$ where $m \geq j \geq n$ and $i_{m+1}=0, i_n=n$. Thus the rightmost minimum of $h$ occurs at $\frac{i_{j+1}}{j+1}$ for the largest $j$ at which  $h\left(\frac{i_{j+1}}{j+1}\right)$ attains the minimum value of $h$.
 
  The action ${f_0}$ on $b$ will add a $0$-\emph{coloured} box at the bottom of the column which corresponds to the right most $``+"$ (it will exist because ${f_0} b \not=0$ in assumption) in the reduced $0$-\emph{signature} of $b$. We consider the following three cases. 
  
 \textbf{Case 1.} Suppose the reduced $0$-\emph{signature} of $b$ has at least one $``+"$ in the last block (i.e., in the $i_m^{th}$-block) in case of strings \eqref{strlm1} and \eqref{strlm2} of Lemma \ref{strglem}. Then the reduced $0$-\emph{signature} will be of the form:
\begin{equation}\label{eq:red1}
\underbrace{++\cdots+}\cdots\cdots\underbrace{++\cdots+}\underbrace{++\cdots+}_{i_m-block}
\end{equation}

Note that the reduced $0$-\emph{signature} \eqref{eq:red1} results in the following inequalities:

\begin{equation}\label{eq:inq1}
         i_m - (i_{m-1}-i_m) \geq 1,; \text{ i.e., } 2 i_m - i_{m-1} \geq 1,
    \end{equation}
    
   and
   \begin{equation}\label{eq:inq2}
    2 (i_m + i_{m-2} + i_{m-4}+\cdots+i_{m-2k}) - i_{m-(2k+1)} - 2 ( i_{m-(2k-1)}+\cdots+i_{m-3} +  i_{m-1}) \geq 1,
   \end{equation}  
    where $k=1,\ldots,\frac{m-n-2}{2}$ for string \eqref{strlm1}, and  $k=1,\ldots,\frac{m-n-3}{2}$ for string \eqref{strlm2}.

   The last inequality for equation \eqref{strlm2} is: 
   \begin{equation}\label{eq:inq3}
    2 (i_m + i_{m-2} + i_{m-4}+\cdots+i_{n+3} + i_{n+1}) - n - 2(i_{n+2} + i_{n+4}+\cdots+i_{m-3} +  i_{m-1}) \geq 1.
   \end{equation}

Observe that in this case the action of ${f_0}$ on $b$ will add a $0$-\emph{coloured} box in $(m+1)^{th}$-column hence, ${f_0}b = (\mu_1+1,\mu_2,\ldots,\mu_l)$ and $\widetilde{({f_0}b)}' = (i_{n+1},i_{n+2},\ldots,i_m,i_{m+1})$ where $i_{m+1}=1$. The image of ${f_0}b$ under the map $\Psi$ is thus  given as follows:

\begin{equation}\label{cs1s}
\Psi({f_0}b) = \left\lbrace\begin{array}{c}
 w_{m+1} > w_{m} > w_{m-1} > \cdots >w_{n+1} > w_{n} \\
 0 < \frac{i_{m+1}}{m+1} < \frac{i_m}{m} < \frac{i_{m-1}}{m-1}<\cdots< \frac{i_{n+1}}{n+1}<1. \end{array}\right.
\end{equation}

 Now we will determine $f_0(\Psi(b))$.

 \textbf{Claim 1.} The rightmost minimum of $h$ occurs at $0$.
 
 \textbf{Proof of claim 1:} Clearly $h(0) =0$, and $m$ is even. By equations \eqref{aglev} we have:
 \[ h\left(\frac{i_m}{m}\right)= \frac{i_m}{m}(m+1) > 1 \]
\[ h\left(\frac{i_{m-1}}{m-1}\right)= \frac{i_m}{m}(m+1) - \left( \frac{i_{m-1}}{m-1}- \frac{i_m}{m} \right) (m-1) = 2 i_m - i_{m-1}  \]

Then $h\left(\frac{i_{m-1}}{m-1}\right) \geq 1$ by the inequality \eqref{eq:inq1}.
\[ h\left(\frac{i_{m-2}}{m-2}\right) = h\left(\frac{i_{m-1}}{m-1}\right) +  \left( \frac{i_{m-2}}{m-2}- \frac{i_{m-1}}{m-1} \right)  (m-1) \geq 1 \]
since $\left( \frac{i_{m-2}}{m-2}- \frac{i_{m-1}}{m-1} \right)(m-1) > 0.$

\[ h\left(\frac{i_{m-3}}{m-3}\right) = h\left(\frac{i_{m-2}}{m-2}\right) - \left( \frac{i_{m-3}}{m-3}- \frac{i_{m-2}}{m-2} \right) (m-3) \]

Then $h\left(\frac{i_{m-3}}{m-3}\right) = 2(i_m+i_{m-2})-(i_{m-3}+2i_{m-1}) \geq 1$ by  \eqref{eq:inq2}.
Similarly in general we can see that $ h\left(\frac{i_{m-(2k+1)}}{m-(2k+1)}\right) \geq 1$ by \eqref{eq:inq2} where $k$ varies as given \eqref{eq:inq2}.
Also, we have:
\[ h\left(\frac{i_{m-2k}}{m-2k}\right) = h\left(\frac{i_{m-(2k+1)}}{m-(2k+1)}\right) + \left(\frac{i_{m-2k}}{m-2k} - \frac{i_{m-(2k+1)}}{m-(2k+1)} \right) (m-2k+1) \geq 1\]
since $h\left(\frac{i_{m-(2k+1)}}{m-(2k+1)}\right) \geq 1$ and $\left(\frac{i_{m-2k}}{m-2k)} - \frac{i_{m-(2k+1)}}{m-(2k+1)} \right) (m-2k+1) \geq 0$ where $k$ varies as given in inequality \eqref{eq:inq2}. 

For the string \eqref{strlm2}: $h(\frac{i_{n+1}}{n+1}) = h(\frac{i_{n+2}}{n+2}) + (\frac{i_{n+1}}{n+1} - \frac{i_{n+2}}{n+2})(n+2) \geq 1$.
Hence we can see that $h\left(\frac{i_j}{j}\right) \geq 1$ for all $m \geq j \geq n+1$ for both strings \eqref{strlm1} and \eqref{strlm2}. Now we determine the endpoint of $h$:
\begin{enumerate}
    \item $h(1) = h(\frac{i_{n+1}}{n+1})+(n+1 - i_{n+1})$ for string \eqref{strlm1}, and we have $h(1)\geq 1$.
    \item $h(1) = h\left(\frac{i_{n+2}}{n+2} \right) + 2i_{n+1}-n-i_{n+2} $  for the string \eqref{strlm2} and
by the above inequality \eqref{eq:inq3} we have $h(1) \geq 1$.
\end{enumerate}
So finally we conclude that the rightmost minimum of the function $h$ is attained at $0$ and that $t_0 = \frac{1}{m+1}$ is the leftmost in $[0,1]$ such that $h(t_0)=1$. Then the action of $f_0$ on the path $\Psi(b)$ is:

\begin{equation}\label{cs1f}
f_0(\Psi(b)) = \left\lbrace\begin{array}{c}
 w_{m+1} > w_{m} > w_{m-1} >\cdots >w_{n+1} > w_{n} \\
 0 < \frac{i_{m+1}}{m+1} < \frac{i_m}{m} < \frac{i_{m-1}}{m-1}<\cdots< \frac{i_{n+1}}{n+1}<1 \end{array}\right.
\end{equation}
where $\frac{i_{m+1}}{m+1} = t_0 \implies i_{m+1}=1$. By equation \eqref{cs1s} and \eqref{cs1f} we see that $\Psi({f_0}b)=  f_0(\Psi(b))$.

\smallskip
\textbf{Case 2.} Suppose the reduced $0$-\emph{signature} of $b$ results in $``+"$ signs only in the first block (i.e., the $(n+1 - i_{n+1})$-block) in case of strings \eqref{strlm1} and \eqref{strlm3} of Lemma \ref{strglem}. Then the reduced $0$-\emph{signature} will be of the form:
\begin{equation}\label{eq:red2}
  \underbrace{+\cdots++}_{(n+1-i_{n+1})}\underbrace{--\cdots--}\cdots\cdots\underbrace{--\cdots--}  
\end{equation}
and \begin{equation}\label{eq:inq21}
    n+1 - i_{n+1} \geq 1.
\end{equation}

Note that the reduced $0$-signature \eqref{eq:red2} results in the following inequalities: 
\begin{equation}\label{eq:inq22}
    i_{n+1}-i_{n+2} \geq i_{n+2} - i_{n+3} \Rightarrow i_{n+1}+i_{n+3} - 2 i_{n+2} \geq 0,
\end{equation}
 
  
and the general term is:
\begin{equation}\label{eq:inq23}
     i_{n+1} + 2(i_{n+2k-1}+\cdots+ i_{n+3}) + i_{n+2k+1} \geq 2 (i_{n+2}+i_{n+4}+\cdots+i_{n+2k})
\end{equation}
where $k=2,3,\ldots,\frac{m-n-2}{2}$ for string \eqref{strlm1} and  $k=2,3,\ldots,\frac{m-n-1}{2}$ for string \eqref{strlm3}.

The last inequality for \eqref{strlm1} is:
\begin{equation}\label{eq:inq24}
     2(i_{m-1}+\cdots+ i_{n+3}) + i_{n+1} \geq 2 (i_{n+2}+i_{n+4}+\cdots+i_{m}).
\end{equation}
Since the rightmost $``+"$ in the reduced $0$-signature of $b$ occurs in the $(n+1 - i_{n+1})$-column, the action of ${f_0}$ on $b$ will add a $0$-\emph{coloured} box at the bottom of the $(n+1-i_{n+1})^{th}$-column. We have the following two cases: \begin{enumerate}
\item  Let $n > i_{n+1}$, then ${f_0}b = (\mu_1,\mu_2,\ldots,\mu_{(i_{(n+1)}+1)}+1,\ldots,\mu_l)$ and \[ \widetilde{({f_0}b)}' = (i_{n+1}+1,i_{n+2},\ldots,i_m)\] 
The image of ${f_0}b$ under $\Psi$ is:
\begin{equation}\label{cs2s1}
\Psi({f_0}b) = \left\lbrace\begin{array}{c}
 w_{m} > w_{m-1} >\cdots>w_{n+1} > w_{n} \\
 0 < \frac{i_m}{m} < \frac{i_{m-1}}{m-1}<\cdots< \frac{i_{n+1}+1}{n+1}<1. \end{array}\right.
 \end{equation}
 \item  Let $n = i_{n+1}$, then ${f_0}b = (\mu_1,\mu_2,\ldots,\mu_l,\mu_{l+1})$ where $\mu_{l+1}=1$ and \[ \widetilde{({f_0}b)}' = (i_{n+2},i_{n+3},\ldots,i_m)\] 
\begin{equation}\label{cs2s2}
\Psi({f_0}b) = \left\lbrace\begin{array}{c}
  w_{m} > w_{m-1} >\cdots>w_{n+1} \\
 0 < \frac{i_m}{m} < \frac{i_{m-1}}{m-1}<\cdots< \frac{i_{n+2}}{n+2}<1. \end{array}\right.
 \end{equation}
\end{enumerate}

Now we will determine $f_0\Psi(b)$; as before for this we have to find the point at which the rightmost minimum of $h$ occurs.

\textbf{Claim 2.} The rightmost minimum of $h$ is attained at $\frac{i_{n+1}}{n+1}$.

\textbf{Proof of claim 2:} Note that by inequality \eqref{eq:inq21}
\[ h(1) - h\left(\frac{i_{n+1}}{n+1}\right) = \left(1-\frac{i_{n+1}}{n+1}\right)(n+1)=n+1-i_{n+1} \geq 1 \]
\[ \Rightarrow h(1) \geq 1+h\left(\frac{i_{n+1}}{n+1}\right) \]

Now consider,
\begin{equation}\label{cs2h1}
h\left(\frac{i_{n+2k+1}}{n+2k+1}\right) - h\left(\frac{i_{n+2(k+1)}}{n+2(k+1)}\right) = -\left(\frac{i_{n+2k+1}}{n+2k+1} - \frac{i_{n+2(k+1)}}{n+2(k+1)} \right)(n+2k+1) \leq 0
\end{equation}
\[ \implies h\left(\frac{i_{n+2k+1}}{n+2k+1}\right) \leq h\left(\frac{i_{n+2(k+1)}}{n+2(k+1)}\right) \]
where $k = 0,1,2,\ldots,\frac{m-n-2}{2}$ for string \eqref{strlm1} and $k=0,1,2,\ldots,\frac{m-n-3}{2}$ for string \eqref{strlm3}.

Next, we consider,
\begin{equation}\label{cs2h2}
h\left(\frac{i_{n+2k}}{n+2k}\right) - h\left(\frac{i_{n+2k+1}}{n+2k+1}\right)  = \left(\frac{i_{n+2k}}{n+2k} - \frac{i_{n+2k+1}}{n+2k+1} \right)(n+2k+1) \geq 0
\end{equation}
where $k=1,2,\ldots,\frac{m-n-2}{2}$ for string \eqref{strlm1} and $k=1,2,\ldots,\frac{m-n-1}{2}$ for string \eqref{strlm3}.

From equations \ref{cs2h1} and \ref{cs2h2} we have:
\[ h\left(\frac{i_{n+1}}{n+1}\right) - h\left(\frac{i_{n+2k+1}}{n+2k+1}\right) \] \[ =  2 (i_{n+2}+i_{n+4}+\cdots+i_{n+2k}) - i_{n+1} - 2(i_{n+2k-1}+\cdots+ i_{n+3}) - i_{n+2k+1}  \]
and from the above inequality \eqref{eq:inq23} we have 
\begin{equation}\label{eq:cs2h3}
    h\left(\frac{i_{n+1}}{n+1}\right) \leq h\left(\frac{i_{n+2k+1}}{n+2k+1}\right)
\end{equation}
where $k=1,2,3,\ldots,\frac{m-n-2}{2}$ for string \eqref{strlm1} and  $k=1,2,3,\ldots,\frac{m-n-1}{2}$ for string \eqref{strlm3}.


Now, from  \eqref{cs2h2} and \eqref{eq:cs2h3} we have
\begin{equation}\label{eq:cs2h4}
     h\left(\frac{i_{n+2k}}{n+2k}\right) \geq h\left(\frac{i_{n+2k+1}}{n+2k+1}\right) \geq h\left(\frac{i_{n+1}}{n+1}\right)
\end{equation}
where $k = 1,2,\ldots,\frac{m-n-2}{2}$ for string \eqref{strlm1} and $k=1,2,\ldots,\frac{m-n-1}{2}$ for string \eqref{strlm3}.

For string \eqref{strlm1} 
\[ h\left(\frac{i_{m-1}}{m-1}\right) - h\left(\frac{i_{m}}{m}\right)  = - \left(\frac{i_{m-1}}{m-1} - \frac{i_{m}}{m} \right)(m-1) \leq 0 \]

\begin{equation}\label{eq:cs2h5}
    \Longrightarrow h\left(\frac{i_{m}}{m}\right) \geq h\left(\frac{i_{m-1}}{m-1}\right) \geq h\left(\frac{i_{n+1}}{n+1}\right) 
\end{equation}

and lastly,
\[ h\left(\frac{i_{n+1}}{n+1}\right) = 2 (i_{n+2}+i_{n+4}+\cdots+i_{m}) - 2(i_{m-1}+\cdots+ i_{n+3}) - i_{n+1} \]

Then, by \eqref{eq:inq24} we have $0=h\left(0\right) \geq h\left(\frac{i_{n+1}}{n+1}\right)$. Likewise, for string \eqref{strlm3}:
\[ h\left(\frac{i_m}{m}\right) =  - \left(\frac{i_m}{m}\right)m = - i_m \leq  0 = h(0) \]
\[ \Rightarrow h(0) \geq h\left(\frac{i_m}{m}\right) \geq \left(\frac{i_{n+1}}{n+1}\right) \]

Finally we have $h\left( \frac{i_{n+1}}{n+1} \right) \leq h(0), \, h\left( \frac{i_{n+1}}{n+1} \right) \leq h(1) $ and $ h\left( \frac{i_{n+1}}{n+1} \right) \leq h\left(\frac{i_{j}}{j}\right)$ for all $m \geq j \geq n+2.$ Hence the rightmost minimum of $h$ occurs at $\frac{i_{n+1}}{n+1}$. Thus the action of $f_0$ on path $\Psi(b)$ becomes:
\begin{enumerate}
\item When $n > i_{n+1}$, we have \[ h\left(\frac{i_{n+1}+1}{n+1}\right) - h\left(\frac{i_{n+1}}{n+1}\right) = 1 \]
and hence,
\begin{equation}\label{cs2f1}
f_0(\Psi(b)) = \left\lbrace\begin{array}{c}
 w_{m} > w_{m-1} >\cdots >w_{n+1} > w_{n} \\
 0 < \frac{i_m}{m} < \frac{i_{m-1}}{m-1}<\cdots< \frac{i_{n+1}+1}{n+1}<1. \end{array}\right.
\end{equation}
By equations \eqref{cs2s1} and \eqref{cs2f1} we have $\Psi({f_0}b)=  f_0(\Psi(b))$.
\item  When $n = i_{n+1}$, we have
\[ h\left( 1 \right) - h\left(\frac{i_{n+1}}{n+1}\right) = 1 \]
and hence,
\begin{equation}\label{cs2f2}
f_0(\Psi(b)) = \left\lbrace\begin{array}{c}
 w_{m} > w_{m-1} >\cdots>w_{n+1}  \\
 0 < \frac{i_m}{m} < \frac{i_{m-1}}{m-1}<\cdots< \frac{i_{n+2}}{n+2}<1. \end{array}\right.
\end{equation}
By equation \eqref{cs2s2} and \eqref{cs2f2} we see that $\Psi({f_0}b)=  f_0(\Psi(b))$
\end{enumerate}

\textbf{Case 3.} Suppose the rightmost plus signs in the reduced $0$-\emph{signature} of $b$ occur in the $(i_{j-1}-i_j)$ block for strings \eqref{strlm1}, \eqref{strlm2}, \eqref{strlm3} and \eqref{strlm4}. Note that $(j-1)$ is even, $i_{j-1}>i_j$ and the reduced $0$-\emph{signature} will be of the form:
\begin{equation}\label{eq:red3}
    \underbrace{++\cdots+}\cdots\underbrace{++\cdots+} \underbrace{++\cdots+}_{(i_{j-1}-i_{j})-block}\underbrace{--\cdots-}\cdots\cdots\underbrace{--\cdots-} 
\end{equation}

The reduced $0$-\emph{signature} in \eqref{eq:red3} implies the following inequalities: (i) the right side inequality of the $(i_{j-1} - i_j)$-block:
\begin{equation}\label{eq:inq31}
        i_j - i_{j+1} \geq i_{j+1} - i_{j+2}\Rightarrow i_j + i_{j+2}\geq 2i_{j+1},
    \end{equation}
and (ii) the general inequality:
\begin{equation}\label{eq:inq32}
     i_{j+2k} + 2(i_{j+2(k-1)}+\cdots+i_{j+2})+i_j \geq 2(i_{j+1}+i_{j+3}+\cdots+i_{j+2k-1}),\end{equation}
where $k=2,3,\ldots,\frac{m-j-1}{2}$ for strings \eqref{strlm1}, \eqref{strlm2} and  $k=2,3,\ldots,\frac{m-j}{2}$ for strings \eqref{strlm3}, \eqref{strlm4}.

The last inequality for \eqref{strlm1}, \eqref{strlm2} is:
\begin{equation}\label{eq:inq33}
    2(i_{m-1} +i_{m-3}+...+i_{j+2})+i_j \geq 2(i_{j+1}+i_{j+3}+....+i_{m-2}+i_m).
\end{equation}

Now, the left side inequality of the $(i_{j-1}-i_j)$-block is,
\begin{equation}\label{eq:inq34}
   1+i_{j-2}-i_{j-1}\leq i_{j-1}-i_j \Rightarrow 1+ i_{j-2}+i_j \leq 2 i_{j-1},
\end{equation}
and the general term is:
\begin{equation}\label{eq:inq35}
    1+ i_{j-2k}+2(i_{j-2(k-1)}+\cdots+i_{j-2})+i_j \leq 2 (i_{j-1}+i_{j-3}+\cdots+i_{j-2k+1}) 
\end{equation}
where $k=2,3,\ldots,\frac{j-n-1}{2}$ for strings \eqref{strlm1},\eqref{strlm3} and $k=2,3,\ldots,\frac{j-n-2}{2}$ for strings \eqref{strlm2}, \eqref{strlm4}.

The last inequality for strings \eqref{strlm2}, \eqref{strlm4} is:
\begin{equation}\label{eq:inq36}
    1+n+2(i_{n+2}+i_{n+4}+\cdots+i_{j-2})+i_j \leq 2 (i_{j-1}+i_{j-3}+\ldots+i_{n+1}).
\end{equation}

Now, we derive ${f_0}b$.
The action of ${f_0}$ on $b$ will add a $0$-\emph{coloured} box at the bottom of $(\mu_{i_j+1})^{th}$-column which has the right most $``+"$. Hence  ${f_0}b = (\mu_1,\mu_2,\ldots,\mu_{(i_j+1)}+1,\ldots,\mu_l)$ and \[ \widetilde{({f_0}b)}' = (i_{n+1},\ldots,i_{j-1},i_j+1,i_{j+1},\ldots,i_m)\] 

Then the image of ${f_0}b$ under the map $\Psi$ is:
\begin{equation}\label{cs3s1}
\Psi({f_0}b) = \left\lbrace\begin{array}{c}
 w_{m} >\cdots> w_{j-1} > w_j>w_{j+1} >\cdots> w_{n} \\
 0 < \frac{i_m}{m} <\cdots<\frac{i_{j-1}}{j-1}< \frac{i_j+1}{j}<\cdots<\frac{i_{n+1}}{n+1}<1. \end{array}\right.
 \end{equation}

 Now, we will derive $f_0\Psi(b)$.  We have to find the point at which the rightmost minimum $h$ occurs.

\textbf{Claim 3.} The rightmost minimum of $h$ occurs at $\frac{i_{j}}{j}$.

\textbf{Proof of claim 3.} Firstly we prove for the interval $\left[0,\frac{i_j}{j}\right]$. We know that $j$ is odd.

Consider,
\begin{equation}\label{eq:cs3cl1}
    h\left(\frac{i_{j+2k}}{j+2k}  \right)- h\left( \frac{i_{j+2k+1}}{j+2k+1}\right) =  - \left( \frac{i_{j+2k}}{j+2k} - \frac{i_{j+2k+1}}{j+2k+1} \right)(j+2k) \leq 0
\end{equation}
where $k=0,1,2,\ldots,\frac{m-j-1}{2}$ for strings \eqref{strlm1}, \eqref{strlm2} and $k=0,1,2,\ldots,\frac{m-j-2}{2}$ for strings \eqref{strlm3}, \eqref{strlm4}.
Also, we have
\begin{equation}\label{eq:cs3cl2}
   h\left(\frac{i_{j+2k+1}}{j+2k+1}  \right)- h\left( \frac{i_{j+2(k+1)}}{j+2(k+1)}\right) =  \left( \frac{i_{j+2k+1}}{j+2k+1} - \frac{i_{j+2(k+1)}}{j+2(k+1)} \right)(j+2(k+1))
\end{equation}
where $k=0,1,2,\ldots,\frac{m-j-3}{2}$ for strings \eqref{strlm1}, \eqref{strlm2} and $k=0,1,2,\ldots,\frac{m-j-2}{2}$ for strings \eqref{strlm3}, \eqref{strlm4}.
From  \eqref{eq:cs3cl1} and \eqref{eq:cs3cl2} for $k=0$, we have:
\[ h\left(\frac{i_j}{j}\right)-h\left(\frac{i_{j+2}}{j+2} \right) = 2i_{j+1} - i_j -i_{j+2},\]
and by inequality \ref{eq:inq31} we have $ h\left(\frac{i_j}{j}\right)\leq h\left(\frac{i_{j+2}}{j+2} \right)$.

Again from \eqref{eq:cs3cl1} and \eqref{eq:cs3cl2} we have:
\[\begin{split}
    h\left(\frac{i_j}{j}\right)-h\left(\frac{i_{j+2(k+1)}}{j+2(k+1)} \right) = 2(i_{j+1}+i_{j+3}+\cdots+i_{j+2k+1}) \\ - i_{j+2k} - 2(i_{j+2(k+1)}+\cdots+i_{j+2})-i_j \leq 0, 
    \end{split}\]
where $k=1,2,\ldots,\frac{m-j-3}{2}$ for strings \eqref{strlm1}, \eqref{strlm2} and  $k=1,2,\ldots,\frac{m-j-2}{2}$ for strings \eqref{strlm3}, \eqref{strlm4}.
Hence, \begin{equation}\label{eq:cs3cl3}
     h\left(\frac{i_j}{j}\right)\leq h\left(\frac{i_{j+2(k+1)}}{j+2(k+1)} \right)
\end{equation} for $k=1,2,\ldots,\frac{m-j-3}{2}$ for strings \eqref{strlm1}, \eqref{strlm2} and  $k=1,2,\ldots,\frac{m-j-2}{2}$ for strings \eqref{strlm3}, \eqref{strlm4}.

From  \eqref{eq:cs3cl1} and \eqref{eq:cs3cl3} we have:
\begin{equation}\label{eq:cs3cl4}
 h\left(\frac{i_j}{j}\right)\leq h\left(\frac{i_{j+2k}}{j+2k}  \right) \leq h\left( \frac{i_{j+2k+1}}{j+2k+1}\right)
\end{equation}
where  for $k=1,2,\ldots,\frac{m-j-1}{2}$ for strings \eqref{strlm1}, \eqref{strlm2}, \eqref{strlm3} and \eqref{strlm4}.
%
For strings \eqref{strlm3} and \eqref{strlm4} we have, $h\left(\frac{i_m}{m}\right) = -\frac{i_m}{m}\cdot m = -i_m \leq 0 = h(0)$. Hence  inequality \eqref{eq:cs3cl4} implies: 
\[ h\left(\frac{i_j}{j}\right)\leq h\left(\frac{i_m}{m}\right)\leq h(0)\]



For strings \eqref{strlm1} and \eqref{strlm2} we have:
\[ h\left(\frac{i_j}{j}\right) =  2(i_{j+1}+i_{j+3}+....+i_{m-2}+i_m) - 2(i_{m-1} +i_{m-3}+...+i_{j+2})-i_j \]
and by \eqref{eq:inq33} we have $h\left(\frac{i_j}{j}\right) \leq 0=h(0).$
We see that $h\left( \frac{i_j}{j} \right)$ is the rightmost minimum of $h$ in the interval $\left[0,\frac{i_j}{j}\right]$.
Next, we will prove that $h(\frac{i_k}{k}) \geq h\left( \frac{i_j}{j} \right)+1$ for $j-1 \geq k \geq n$ where $i_n = n$.

Consider,
\begin{equation}\label{eq:cs3cl5}
    h\left(\frac{i_{j-2k}}{j-2k}  \right)- h\left( \frac{i_{j-2k+1}}{j-2k+1}\right) =  - \left( \frac{i_{j-2k}}{j-2k} - \frac{i_{j-2k+1}}{j-2k+1} \right)(j-2k) \leq 0
\end{equation}
where $k=1,2,\ldots,\frac{j-n-1}{2}$ for strings \eqref{strlm1}, \eqref{strlm3} and $k=1,2,\ldots,\frac{j-n-2}{2}$ for strings \eqref{strlm2}, \eqref{strlm4}.
Also, we have
\begin{equation}\label{eq:cs3cl6}
   h\left(\frac{i_{j-2k-1}}{j-2k-1}  \right)- h\left( \frac{i_{j-2k}}{j-2k}\right) =  \left( \frac{i_{j-2k-1}}{j-2k-1} - \frac{i_{j-2k}}{j-2k} \right)(j-2k)
\end{equation}
where $k=0,1,2,\ldots,\frac{j-n-3}{2}$ for strings \eqref{strlm1}, \eqref{strlm3} and $k=0,1,2,\ldots,\frac{j-n-2}{2}$ for strings \eqref{strlm2}, \eqref{strlm4}.
Note that, from equation \eqref{eq:cs3cl6} for $k=0$ we have:
\[h\left(\frac{i_{j-1}}{j-1} \right) - h\left(\frac{i_j}{j}\right) =  \left(\frac{i_{j-1}}{j-1} - \frac{i_j}{j} \right)\cdot j = (i_{j-1} - i_j)+\frac{i_{j-1}}{j-1} \geq 1 \]
\[\implies h\left(\frac{i_{j-1}}{j-1} \right) \geq 1+ h\left(\frac{i_j}{j}\right) \]

From equations \eqref{eq:cs3cl5} for $k=1$ and \eqref{eq:cs3cl6} for $k=0$ we have:

\[h\left(\frac{i_{j-2}}{j-2} \right) - h\left(\frac{i_j}{j}\right) =  2i_{j-1} - i_j - i_{j-2} \]

By the inequality \eqref{eq:inq34} above, we have $ h\left(\frac{i_{j-2}}{j-2} \right) \geq h\left(\frac{i_j}{j}\right)+1$.
Again from equations \eqref{eq:cs3cl5} for $k=2,3,\ldots,\frac{j-n-1}{2}$ and \eqref{eq:cs3cl6} for $k=1,2,\ldots,\frac{j-n-2}{2}$ we have:
\[h\left(\frac{i_{j-2k}}{j-2k} \right) - h\left(\frac{i_j}{j}\right) =  \] \[   2 (i_{j-1}+i_{j-3}+\cdots+i_{j-2k+1}) -  i_{j-2k}-2(i_{j-2(k-1)}+\cdots+i_{j-2})-i_j \]
for $k=2,3,\ldots,\frac{j-n-1}{2}$ for strings \eqref{strlm1}, \eqref{strlm3} and $k=2,3,\ldots,\frac{j-n-2}{2}$ for strings \eqref{strlm2}, \eqref{strlm4}. By \eqref{eq:inq35} we have $h\left(\frac{i_{j-2k}}{j-2k} \right) \geq h\left(\frac{i_j}{j}\right)+1$.
From equation \eqref{eq:cs3cl6} we have:
\[ h\left(\frac{i_{j-2k-1}}{j-2k-1} \right) \geq h\left(\frac{i_{j-2k}}{j-2k} \right) \geq h\left(\frac{i_j}{j}\right)+1 \]
for $k=1,2,\ldots,\frac{j-n-3}{2}$ for strings \eqref{strlm1},\eqref{strlm3} and $k=1,2,\ldots,\frac{j-n-2}{2}$ for strings \eqref{strlm2}, \eqref{strlm4}.


For strings \eqref{strlm1} and \eqref{strlm3},
\[ h(1) - h\left( \frac{i_{n+1}}{n+1} \right) = \left(1- \frac{i_{n+1}}{n+1} \right) (n+1)=n+1-i_{n+1}\geq 0.\]
\[\Rightarrow h(1) \geq h\left( \frac{i_{n+1}}{n+1} \right) \geq h\left(\frac{i_j}{j}\right)+1. \]

For strings \eqref{strlm2} and \eqref{strlm4},
\[ h(1) - h\left( \frac{i_j}{j} \right) = n+2(i_{n+2}+i_{n+4}+...+i_{j-2})+i_j - 2 (i_{j-1}+i_{j-3}+...+i_{n+1}) \]
and by inequality \eqref{eq:inq36} we have $h(1) \geq h\left( \frac{i_j}{j} \right)+1$.

Finally we have proved that $h(\frac{i_k}{k}) \geq h\left( \frac{i_j}{j} \right)+1$ for $j-1 \geq k \geq n$ where $i_n = n$. Thus $h$ has its rightmost minimum at $\frac{i_j}{j}$ and $h(t) \geq h\left( \frac{i_j}{j} \right)+1$ for $t\in \left[\frac{i_{j-1}}{j-1},1\right]$. Also,  $t_1 =\frac{i_j+1}{j} \in \left[\frac{i_j}{j},1\right]$ is the leftmost such that $h(t_1) - h\left( \frac{i_j}{j} \right) = 1. $

Then the action of $f_0$ on path $\Psi(b)$ is:

\begin{equation}\label{cs3f1}
f_0(\Psi(b)) = \left\lbrace\begin{array}{c}
 w_{m} >\cdots> w_{j-1} > w_j>w_{j+1} >\cdots> w_{n} \\
 0 < \frac{i_m}{m} <\cdots<\frac{i_{j-1}}{j-1}< \frac{i_j+1}{j}<\cdots<\frac{i_{n+1}}{n+1}<1. \end{array}\right.
\end{equation}

 From the equations \eqref{cs3s1} and \eqref{cs3f1} we see that $\Psi({f_0}b)=  f_0(\Psi(b))$, which completes the proof.
\end{proof}

\bibliographystyle{plain}

\begin{thebibliography}{10}
\expandafter\ifx\csname url\endcsname\relax
  \def\url#1{{\tt #1}}\fi
\expandafter\ifx\csname urlprefix\endcsname\relax\def\urlprefix{URL }\fi

\bibitem{jimbo-et-al} 
M.~Jimbo, K.C.~Misra, T.~Miwa, M.~Okado, {\em Combinatorics of representations of $U_q(\widehat{\mathfrak{sl}}(n))$ at $q=0$}, Commun. Math. Phys. {\bf 136}, 1991, pp.~543–566

\bibitem{kac}
V.~G. Kac, {\em Infinite-dimensional {L}ie algebras}, 3rd edn., Cambridge
  University Press, Cambridge, 1990,
  \urlprefix\url{http://dx.doi.org/10.1017/CBO9780511626234}.

\bibitem{litt:inv}
P.~Littelmann, {\em A {L}ittlewood-{R}ichardson rule for symmetrizable
  {K}ac-{M}oody algebras}, Invent. Math., {\bf 116}, no. 1-3, 1994,
  pp.~329--346,  \urlprefix\url{https://doi.org/10.1007/BF01231564}.

\bibitem{litt:ann}
P.~Littelmann, {\em Paths and root operators in representation theory}, Ann.
  of Math. (2), {\bf 142}, no.~3, 1995, pp.~499--525,
  \urlprefix\url{https://doi.org/10.2307/2118553}.
  
\bibitem{joseph}
A.~Joseph, {\em Modules with a {D}emazure flag}, in: {\em Studies in {L}ie
  theory\/}, vol. 243 of Progr. Math., Birkh\"{a}user Boston, Boston, MA, 2006,
  pp. 131--169, \urlprefix\url{https://doi.org/10.1007/0-8176-4478-4_8}.

\bibitem{llm}
V.~Lakshmibai, P.~Littelmann, and P.~Magyar, {\em Standard monomial theory for
  {B}ott-{S}amelson varieties}, Compositio Math., {\bf 130}, no.~3, 2002,
  pp.~293--318,  \urlprefix\url{https://doi.org/10.1023/A:1014396129323}.

\bibitem{mw}
K.~C. Misra, E.~A. Wilson, {\em On tensor product decomposition of $\widehat{\mathfrak{sl}}(n)$ modules}, J. Algebra Appl., {\bf 12},  no. 8, 2013, 1350054, 16 pp.

\bibitem{ys}
Y.~B. Sanderson, {\em Dimensions of Demazure modules for rank
two affine Lie algebras}, Compositio. Mathematica., {\bf 101},  1996,
  pp.~115--131,  \urlprefix\url{http://www.numdam.org/item?id=CM_1996__101_2_115_0}.

\bibitem{krv}
M.~S. Kushwaha, K.~N. Raghavan, S.~Viswanath {\em A study of {K}ostant-{K}umar modules via {L}ittelmann paths}, {Advances in Mathematics}, {381}, (2021), {107614}, pp. {0001-8708},
   \urlprefix\url{https://doi.org/10.1016/j.aim.2021.107614}.
     
\bibitem{mm}
K.C.~Misra, T.~Miwa {\em Crystal base for basic representation of $U_q(\widehat{\mathfrak{sl}}(n))$}, Commun. Math. Phys., {\bf 134},  (1990),
  pp.~79--88.


  
\end{thebibliography}

\end{document}